\documentclass[11pt]{article}

\usepackage[T1]{fontenc}
\usepackage{mathpazo}
\usepackage{amsmath,amssymb,amsthm,mathtools,bm}
\usepackage{graphicx}
\usepackage{booktabs}
\usepackage{multirow}
\usepackage{caption}
\usepackage{float}
\usepackage{algorithm}
\usepackage{algorithmic}
\usepackage[colorlinks=true,linkcolor=blue!50!black,citecolor=blue!50!black,urlcolor=blue!50!black]{hyperref}
\usepackage[margin=25mm]{geometry}
\usepackage{xcolor}

\newtheorem{theorem}{Theorem}
\newtheorem{proposition}[theorem]{Proposition}
\newtheorem{definition}[theorem]{Definition}
\newtheorem{corollary}[theorem]{Corollary}
\newtheorem{remark}[theorem]{Remark}
\newtheorem{assumption}[theorem]{Assumption}

\newcommand{\R}{\mathbb{R}}
\newcommand{\tr}{\mathrm{tr}}
\newcommand{\inner}[2]{\langle #1,#2\rangle}
\newcommand{\Frob}[1]{\|#1\|_F}

\title{\bf Halley's Method for Rectangular Matrix Variables\\
and the Matrix Schwarzian Derivative}
\author{Shintaro Yoshizawa\\
\normalsize Nagoya Mathematical and Information Science Research\\
\normalsize \texttt{shintaro.yoshizwa.net@gmail.com}}
\date{}

\begin{document}
\maketitle

\begin{abstract}
Alefeld (1981) recast the cubic convergence of Halley's method for a scalar equation $f(x)=0$
as Newton's method applied to $g(x)=f(x)/\sqrt{f'(x)}$, governed by the Schwarzian derivative
$Sf=f'''/f'-\tfrac32(f''/f')^2$. We generalize this to gradient fields $F=\nabla\phi$ on
rectangular matrix variables $X\in\R^{m\times n}$, defining a \emph{matrix Schwarzian
derivative} that admits a natural interpretation via the Amari--Chentsov skewness tensor and
$\alpha$-connections of information geometry. Inspired by Palmore's (1994) identity relating
the Newton map to the Schwarzian derivative, we give a lighter, more general construction that
recovers it from the third Fr\'echet derivative of the Newton map itself, requiring no operator
square root, and prove a main theorem establishing that the matrix Halley iteration map itself
has vanishing first and second derivatives at the root, with third derivative equal to
$-\tfrac12$ times the matrix Schwarzian derivative --- giving a genuine local
cubic-convergence theorem with an explicit asymptotic error constant, without any
self-adjointness, commutativity, or gradient-field assumption. We build a matrix-free
algorithm (Hessian-vector products only) and validate the predicted quadratic and cubic
convergence orders numerically. A root-preserving power-Newton
family is shown to achieve a scalar cubic-order jump without evaluating $f'''$, but its naive
matrix extension fails; analyzing this failure yields a unifying picture in which Halley's
method is the multivariate limit of the power-deformation trick. We contrast this with the
matrix Laguerre family, which --- unlike Halley's method --- genuinely requires a
non-self-adjoint operator square root. A case study on a weighted Oja-type matrix dynamical
system $\dot X=AXB-XBX^TAX$ shows that local convergence is governed by the eigenvalue gaps
used by the target root rather than the global condition number of $A$, that the benefit of
cubic convergence grows with ill-conditioning, and that eigenvalue-degeneracy-induced sign
ambiguity is handled correctly by a sign-invariant convergence criterion. Pushing the same
experiments to 70-digit precision confirms that the empirical convergence orders agree with
theory to many digits, that the sign bifurcation is a genuine precision-independent
phenomenon, and that attainable accuracy follows a simple ``working precision minus
condition-number digits'' budget. Finally, we note that for spectrally reducible problems the
coupled formulation is outperformed by classical sequential deflation, so the coupled theory's
genuine value lies in truly coupled matrix equations admitting no such decomposition.
\end{abstract}

\tableofcontents

\section{Introduction}
\label{sec:intro}

\subsection{Background: Alefeld's scalar theory}
\label{sec:background}
For $f:\R\to\R$, Halley's method is
\begin{equation}
x_{k+1}=x_k-\frac{f(x_k)}{f'(x_k)-\frac12 f''(x_k)\dfrac{f(x_k)}{f'(x_k)}}.
\label{eq:halley-scalar}
\end{equation}
Alefeld's \cite{Alefeld1981} central observation is that if one introduces the auxiliary
function $g(x)=f(x)/\sqrt{f'(x)}$ (defined on an interval where $f'>0$), then
\eqref{eq:halley-scalar} is \emph{exactly Newton's method applied to $g$}. Indeed,
\begin{equation}
g'(x)=\sqrt{f'(x)}-\frac12\frac{f''(x)f(x)}{\sqrt{f'(x)}^{\,3}}
=\frac{1}{\sqrt{f'(x)}}\Big[f'(x)-\frac12 f''(x)\frac{f(x)}{f'(x)}\Big],
\label{eq:gprime}
\end{equation}
so that $-g(x)/g'(x)$ coincides with the second term on the right of
\eqref{eq:halley-scalar}. Moreover,
\begin{equation}
g''(x)=\frac12\,g(x)\Big[-\frac{f'''(x)}{f'(x)}+\frac32\Big(\frac{f''(x)}{f'(x)}\Big)^{2}\Big]
=-\frac12\,g(x)\,Sf(x),
\qquad Sf:=\frac{f'''}{f'}-\frac32\Big(\frac{f''}{f'}\Big)^{2},
\label{eq:gdoubleprime}
\end{equation}
so the second derivative of $g$ --- the leading term governing the error of Newton's method
applied to $g$ --- is controlled entirely by the \textbf{Schwarzian derivative} of $f$.
Alefeld's theorem uses a uniform bound $|g''|\le M_0$ to establish a Newton--Kantorovich-type
existence and convergence result, yielding the cubic error estimate
\[
|x^\ast-x_k|\le \frac{M_{k-1}}{|g'(x_k)|}|x_k-x_{k-1}|^2 .
\]

\subsection{Three challenges in the matrix generalization}
\label{sec:challenges}
Attempting to lift this construction to a (not necessarily square) matrix variable
$X\in\R^{m\times n}$ immediately raises three challenges.

\begin{description}
\item[Challenge I (what does ``division'' mean?)] How should one define matrix analogues of
the scalar operations $1/f'(x)$ and $\sqrt{f'(x)}$? When $X$ is rectangular, $F(X)$ is
rectangular as well, and there is no naive notion of ``dividing by a matrix''.
\item[Challenge II (non-commutativity)] Because matrix multiplication does not commute, a
scalar identity such as \eqref{eq:gdoubleprime} need not survive as an \emph{equality} in the
matrix setting. One must clarify exactly how far the equality persists, and at what point it
must be replaced by an inequality in operator norm.
\item[Challenge III (computability)] When $X$ is large (say $m,n\sim10^2$--$10^4$), forming the
Hessian operator explicitly as an $mn\times mn$ matrix is simply infeasible. For the theory to
be more than a paper exercise, a matrix-free implementation based solely on
Hessian-vector products is essential.
\end{description}

\subsection{Contributions and outline}
\label{sec:contributions}
The remainder of the paper resolves these three challenges in turn and then subjects the
resulting theory to increasingly demanding numerical tests. Concretely, our goals are:
\begin{enumerate}
\item to generalize Alefeld's cubic-convergence theory for Halley's method --- built around the
scalar Schwarzian derivative --- to gradient fields on \emph{rectangular} matrix variables
$X\in\R^{m\times n}$, without any unjustified logical leap;
\item to connect this generalization to the skewness tensor and $\alpha$-connections of Amari's
information geometry \cite{AmariNagaoka}, giving a precise geometric answer, in the language of
metrics, connections, and duality, to the question ``what is Halley's method actually doing?'';
\item to avoid leaving the theory on paper: we give a \textbf{matrix-free} implementation that
never forms an $mn\times mn$ matrix, verify cubic convergence numerically, and quantify the
trade-offs against plain Newton's method.
\end{enumerate}
Section~\ref{sec:approach} resolves Challenges~I and~II; Section~\ref{sec:matrix-halley}
constructs the matrix Halley method itself together with its geometric meaning;
Section~\ref{sec:power-newton} develops a complementary perspective through a power-deformation
family; Section~\ref{sec:matrix-free} resolves Challenge~III with a concrete matrix-free
algorithm; Section~\ref{sec:numerics} validates the entire construction numerically;
Section~\ref{sec:application} stress-tests the theory on a genuine matrix dynamical system;
Section~\ref{sec:highprecision} pushes the same experiments to 70-digit precision to remove any
ambiguity about which observed phenomena are genuine and which are floating-point artifacts;
and Section~\ref{sec:conclusion} concludes with a candid discussion of when the coupled theory
is, and is not, the right practical tool.

\section{Resolving the First Two Challenges}
\label{sec:approach}

\subsection{Challenge I: the gradient-field assumption}
\label{sec:gradient-field}
Challenge~I is resolved by adopting the following setting.

\begin{assumption}[$F$ as a gradient field]
\label{ass:gradient}
Let $\phi:\R^{m\times n}\to\R$ be a $C^3$, strongly convex potential, and consider
\[
F:=\nabla\phi:\ \R^{m\times n}\longrightarrow \R^{m\times n}.
\]
That is, the \textbf{domain and codomain of $F$ coincide}, both equal to $\R^{m\times n}$.
\end{assumption}

Even when $X$ is rectangular, $\R^{m\times n}$ itself is a real Hilbert space of dimension $mn$
under the Frobenius inner product $\inner{U}{V}=\tr(U^\top V)$. Under
Assumption~\ref{ass:gradient},
\[
A(X):=DF(X)=\nabla^2\phi(X):\R^{m\times n}\to\R^{m\times n}
\]
is a \textbf{self-adjoint} linear operator (the Hessian operator), and strong convexity of
$\phi$ gives $A(X)\succ0$. Consequently,
\begin{itemize}
\item the \textbf{inverse operator} $A(X)^{-1}$ plays the role of ``$1/f'(x)$'', and
\item the \textbf{operator square root} $A(X)^{1/2}$ (defined via the functional calculus of a
self-adjoint positive operator, i.e.\ through its spectral decomposition) plays the role of
``$\sqrt{f'(x)}$''.
\end{itemize}
Both are well defined, which is precisely why a matrix version of Halley's method makes sense
even for rectangular $X$. (If $F$ were merely some rectangular-matrix-valued map with domain
and codomain differing, then $A(X)$ would not even be square, and this whole construction would
collapse from the outset. The gradient-field hypothesis is the essential assumption that makes
the generalization possible.)

\subsection{Challenge II: making explicit where the equality breaks down}
\label{sec:noncommutativity}
The effect of non-commutativity cannot be handled by naively transplanting scalar
differentiation formulas into the matrix setting. The correct tool is the
\textbf{Daleckii--Krein (Sylvester integral) formula} for the derivative of a function of a
self-adjoint operator: for $A=\sum_i\lambda_iP_i$ (spectral decomposition, $P_i$ orthogonal
projections),
\begin{equation}
D\big(A^{-1/2}\big)[H]=-\sum_{i,j}\frac{P_i\,H\,P_j}{\sqrt{\lambda_i\lambda_j}\,(\sqrt{\lambda_i}+\sqrt{\lambda_j})}.
\label{eq:daleckii}
\end{equation}
In the special case where $A$ and $H$ (or $DA[H]$) commute (i.e.\ are simultaneously
diagonalizable), \eqref{eq:daleckii} degenerates into the scalar quotient-rule formula, and one
recovers an identity of \emph{exactly the same form} as \eqref{eq:gdoubleprime}. In the general,
non-commuting case, the correct generalization is not an equality but the operator-norm
inequality
\[
\big\|D^2G(X)[H,H]\big\|\ \le\ \tfrac12\,\|G(X)\|\,M_0 .
\]
Crucially, this matches the fact that Alefeld's own theorem is itself built not on an equality
but on the \emph{inequality} $|g''(x)|\le M_0$. In other words, the difficulty peculiar to
matrices --- non-commutativity --- is absorbed, without any extra hypothesis, into the
Kantorovich-type inequality framework that Alefeld had already chosen. This is precisely why we
adopt ``generalization via inequality'' rather than ``generalization via equality'' as our
guiding principle, and it is the key that lets the generalization proceed without a logical
leap. (Challenge~III --- computability --- is addressed later, in
Section~\ref{sec:matrix-free}, with a concrete algorithm.)

\section{The Matrix Halley Method: Construction and Geometric Meaning}
\label{sec:matrix-halley}

\subsection{The operator Halley method (a Hilbert-space instance of D\"oring's construction)}
\label{sec:operator-halley}
We instantiate D\"oring's \cite{Doering1970} Banach-space version of Halley's method in the
Hilbert space $(\R^{m\times n},\inner{\cdot}{\cdot})$. Writing
\[
A_k:=A(X_k)=DF(X_k),\qquad B_k[U,V]:=D^2F(X_k)[U,V]\ \ (\text{symmetric bilinear}),
\]
we define the Newton direction
\begin{equation}
H_k:=-A_k^{-1}\big[F(X_k)\big].
\label{eq:newton-step}
\end{equation}

\begin{proposition}[Matrix Halley iteration]
\label{prop:halley-matrix}
Let $L_k[\cdot]:=A_k^{-1}\,B_k[H_k,\cdot]$ (the linear operator obtained by substituting the
Newton direction $H_k$ into one slot of the Hessian derivative), and suppose $I+\tfrac12 L_k$
is invertible. Then
\begin{equation}
X_{k+1}=X_k+\Big(I+\tfrac12 L_k\Big)^{-1}H_k
\label{eq:halley-matrix}
\end{equation}
reduces to \eqref{eq:halley-scalar} when $F$ is specialized to a scalar function $F(x)=f(x)$.
\end{proposition}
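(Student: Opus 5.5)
The plan is a direct specialization: take $m=n=1$, so that $\R^{m\times n}=\R$ with the Frobenius inner product reducing to ordinary multiplication, and identify every operator in \eqref{eq:halley-matrix} with the corresponding scalar. Concretely, with $F=f$ we have $A_k=f'(x_k)$ acting by multiplication and $B_k[U,V]=f''(x_k)\,UV$. The Newton direction \eqref{eq:newton-step} then becomes $H_k=-f(x_k)/f'(x_k)$. The invertibility of $A_k$ is part of the standing hypotheses: either strong convexity under Assumption~\ref{ass:gradient}, or simply $f'(x_k)\neq0$ in the scalar statement.

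Next I would compute $L_k$. For any $U\in\R$,
\[
L_k[U]=A_k^{-1}B_k[H_k,U]=\frac{f''(x_k)H_k}{f'(x_k)}\,U=-\frac{f''(x_k)f(x_k)}{f'(x_k)^2}\,U .
\]
So $L_k$ is multiplication by the scalar $\ell_k:=-f''(x_k)f(x_k)/f'(x_k)^2$. The hypothesis that $I+\tfrac12L_k$ is invertible becomes $1+\tfrac12\ell_k\neq0$. This is equivalent to the Halley denominator $f'(x_k)-\tfrac12 f''(x_k)f(x_k)/f'(x_k)$ being nonzero, because that denominator equals $f'(x_k)(1+\tfrac12\ell_k)$.

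Finally, the update \eqref{eq:halley-matrix} becomes
\[
x_{k+1}=x_k+\frac{H_k}{1+\tfrac12\ell_k}
=x_k-\frac{f(x_k)/f'(x_k)}{1-\tfrac12 f''(x_k)f(x_k)/f'(x_k)^2} .
\]
Multiplying the numerator and denominator by $f'(x_k)$ gives exactly \eqref{eq:halley-scalar}.

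There is no real obstacle here. The only point needing care is the sign bookkeeping when $H_k$ is substituted into $B_k$: one must check that $+\tfrac12L_k$ produces the $-\tfrac12 f''f/f'$ term in the Halley denominator. One should also note explicitly that the invertibility hypothesis matches the well-definedness of the scalar formula, so that the two iterations agree wherever either is defined.
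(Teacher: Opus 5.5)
Your proposal is correct and is essentially the paper's argument. The paper runs the same algebra in the opposite direction: it starts from \eqref{eq:halley-scalar}, rewrites the denominator as $f'(x_0)+\tfrac12 f''(x_0)N_0$ with $N_0=-f(x_0)/f'(x_0)$, and identifies $\ell_0=(f''(x_0)/f'(x_0))N_0$ with the scalar form of $L_0$, just as you do with $\ell_k$. Your remark that invertibility of $I+\tfrac12 L_k$ is equivalent to the scalar Halley denominator being nonzero is a small, useful point that the paper leaves implicit.
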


\begin{proof}[Derivation (with attention to sign)]
We check this by returning to the scalar case. Let $N_0:=-f(x_0)/f'(x_0)$ (the Newton
direction). Since $f(x_0)=-N_0f'(x_0)$, the denominator in \eqref{eq:halley-scalar} is
\[
f'(x_0)-\tfrac12 f''(x_0)\frac{f(x_0)}{f'(x_0)}
=f'(x_0)-\tfrac12 f''(x_0)(-N_0)
=f'(x_0)+\tfrac12 f''(x_0)N_0 .
\]
Hence
\[
x_1-x_0=-\frac{f(x_0)}{f'(x_0)+\frac12 f''(x_0)N_0}
=\frac{N_0 f'(x_0)}{f'(x_0)+\frac12 f''(x_0)N_0}
=\frac{N_0}{1+\frac12\big(f''(x_0)/f'(x_0)\big)N_0}.
\]
Writing $\ell_0:=(f''(x_0)/f'(x_0))N_0$, this reads $x_1-x_0=(1+\tfrac12\ell_0)^{-1}N_0$, which
matches the matrix quantity $L_0[\cdot]=A_0^{-1}D^2F(X_0)[H_0,\cdot]$ (in the scalar case,
$L_0=(f''/f')H_0=\ell_0$). This is the basis for the sign in \eqref{eq:halley-matrix} (namely
$+\tfrac12L_k$, not $-\tfrac12L_k$).
\end{proof}

\begin{remark}
Some references instead define an oppositely signed operator $L_F(x)=A^{-1}B[-H_0,\cdot]=-L_0$
and write $X_1=X_0-(I-\tfrac12 L_F)^{-1}A_0^{-1}F(X_0)$ (using $\Gamma F=-H_0$). To avoid being
misled by such superficial sign differences, it is safest, when implementing the method, to
adopt uniformly the convention of \eqref{eq:halley-matrix}, in which the Newton direction $H_k$
itself is substituted into the Hessian derivative. (Indeed, an early implementation used in our
own numerical experiments contained precisely this sign error, which produced the spurious
result that Halley's method appeared \emph{slower} than Newton's method. Section~\ref{sec:numerics}
reports only the corrected, and correct, results.)
\end{remark}

\subsection{Auxiliary map and the matrix Schwarzian derivative}
\label{sec:auxiliary}
Set $G(X):=A(X)^{-1/2}\big[F(X)\big]$. The same computation as in the scalar case shows that
Newton's method applied to $G$ coincides with \eqref{eq:halley-matrix} applied to $F$. When $A$
and $DA[H]$ commute, \eqref{eq:daleckii} yields:

\begin{definition}[Matrix Schwarzian derivative]
\label{def:matrix-schwarzian}
Let $A(X)=D^2\phi(X)$ and $T_X:=D^3\phi(X)$ (a symmetric trilinear form). For a direction $H$,
\begin{equation}
\mathcal{S}\phi(X)[H,H]:=A(X)^{-1}T_X(H,H,\cdot)-\frac32\Big(A(X)^{-1}T_X(H,\cdot,\cdot)\Big)^{2}
\label{eq:matrix-schwarzian}
\end{equation}
is called the \textbf{matrix Schwarzian derivative} of $\phi$ at $X$ along $H$. When $A$ and
$DA[H]$ commute,
\[
D^2G(X)[H,H]=-\tfrac12\,G(X)\,\mathcal{S}\phi(X)[H,H],
\]
which is the exact matrix analogue of \eqref{eq:gdoubleprime}. In the general non-commuting
case this must be read as the inequality
$\|D^2G(X)[H,H]\|\le\tfrac12\|G(X)\|\cdot\|\mathcal S\phi(X)[H,H]\|+(\text{a Daleckii--Krein-type
commutator correction})$.
\end{definition}

\subsection{A lighter alternative: the third derivative of the iteration map}
\label{sec:light-schwarzian}
Definition~\ref{def:matrix-schwarzian} requires the operator square root $A(X)^{1/2}$ and the
Daleckii--Krein formula, and forces us to pass to an inequality in the non-commuting case. We
now give a lighter construction, inspired by Palmore's identity \cite{Palmore1994} (the fact
that the third derivative of the scalar Newton map $N(x)=x-f(x)/f'(x)$ at a fixed point
$\alpha$ coincides with the Schwarzian derivative) as reported in Yoshizawa's slides
\cite{Yoshizawa2014}: we extract the matrix Schwarzian derivative directly from the
\textbf{third Fr\'echet derivative of the Newton map itself}. Crucially, this construction
requires neither $A^{1/2}$ nor self-adjointness, and holds even when $F$ is not a gradient
field.

\begin{proposition}[Higher derivatives of the Newton map and the matrix Schwarzian derivative]
\label{prop:newton-map-derivatives}
Let $F:\R^{m\times n}\to\R^{m\times n}$ (not necessarily a gradient field) have a simple zero
$X^\ast$, with $A^\ast:=DF(X^\ast)$ invertible, $B^\ast:=D^2F(X^\ast)$, and
$C^\ast:=D^3F(X^\ast)$. For the Newton map $\mathcal N(X):=X-DF(X)^{-1}F(X)$ and any direction
$H$,
\begin{equation}
D\mathcal N(X^\ast)[H]=0,\qquad
D^2\mathcal N(X^\ast)[H,H]=A^{\ast-1}B^\ast[H,H],
\label{eq:N2}
\end{equation}
\begin{equation}
D^3\mathcal N(X^\ast)[H,H,H]
=2\,A^{\ast-1}C^\ast[H,H,H]-3\,A^{\ast-1}B^\ast\big[H,\,A^{\ast-1}B^\ast[H,H]\big].
\label{eq:N3}
\end{equation}
\end{proposition}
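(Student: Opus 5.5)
The plan is to avoid differentiating $DF(X)^{-1}$ directly. I would work instead with the Newton correction $E(X):=X-\mathcal N(X)=DF(X)^{-1}F(X)$, which satisfies the bilinear identity
\[
F(X)=A(X)\big[E(X)\big],\qquad A(X):=DF(X),
\]
in a neighbourhood of $X^\ast$. The map $A(X)$ stays invertible near $X^\ast$ by continuity, since $A^\ast$ is invertible. Regularity needs care: I will assume $F$ is $C^4$ near $X^\ast$ (or at least $C^3$ with $DF$ having a third derivative), so that $E$ and $\mathcal N$ are $C^3$.

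Because all the derivatives involved are symmetric multilinear maps, it suffices to compute the diagonal directional derivatives $\frac{d^k}{dt^k}\big|_{t=0}$ along $X^\ast+tH$. Polarization then recovers the full multilinear forms.

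The main step is to apply the Leibniz rule to the pairing $(A,E)\mapsto A[E]$ along $X^\ast+tH$. I would use $DA(X)[H]=D^2F(X)[H,\cdot]$ and $D^2A(X)[H,H]=D^3F(X)[H,H,\cdot]$. Differentiating the identity $k=1,2,3$ times gives
\begin{align*}
DF[H]&=DA[H]\,E+A\,DE[H],\\
D^2F[H,H]&=D^2A[H,H]\,E+2\,DA[H]\,DE[H]+A\,D^2E[H,H],\\
D^3F[H,H,H]&=D^3A[H,H,H]\,E+3\,D^2A[H,H]\,DE[H]+3\,DA[H]\,D^2E[H,H]+A\,D^3E[H,H,H].
\end{align*}

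I then evaluate at $X^\ast$, where $E(X^\ast)=0$ because $F(X^\ast)=0$, and solve recursively:
\begin{itemize}
\item The first identity gives $A^\ast H=A^\ast DE[H]$, so $DE(X^\ast)=I$ and hence $D\mathcal N(X^\ast)=0$.
\item Substituting into the second gives $B^\ast[H,H]=2B^\ast[H,H]+A^\ast D^2E[H,H]$. So $D^2E(X^\ast)[H,H]=-A^{\ast-1}B^\ast[H,H]$, and $D^2\mathcal N(X^\ast)[H,H]=-D^2E(X^\ast)[H,H]$ is the second formula in \eqref{eq:N2}.
\item The third gives $C^\ast[H,H,H]=3C^\ast[H,H,H]-3B^\ast\big[H,A^{\ast-1}B^\ast[H,H]\big]+A^\ast D^3E[H,H,H]$. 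Solving for $D^3E$ and negating yields \eqref{eq:N3}.
\end{itemize}

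No self-adjointness, gradient structure or square root enters anywhere. The argument uses only invertibility of $A^\ast$ and the symmetry of $B^\ast$ and $C^\ast$ as higher Fréchet derivatives.

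I expect the main obstacle to be bookkeeping rather than conceptual. One point is making sure the Leibniz coefficients $1,3,3,1$ are correct for the bilinear pairing. The other is checking that terms such as $DA[H]\,D^2E[H,H]$ really equal $B^\ast\big[H,D^2E[H,H]\big]$, which relies on the symmetry of $B^\ast$. Alongside this, I would state the smoothness hypothesis explicitly, since the statement itself leaves it implicit.
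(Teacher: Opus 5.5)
Your argument is correct, and it reaches \eqref{eq:N2}--\eqref{eq:N3} by a genuinely different route from the paper's.

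The paper sets $X=X^\ast+\varepsilon H$ and expands $A^{\ast-1}F(X)$ to third order. It writes $A(X)=A^\ast\big(I+\varepsilon\mathcal B+\tfrac{\varepsilon^2}{2}\mathcal C+O(\varepsilon^3)\big)$, inverts this by a Neumann series, and multiplies the two expansions. It then reads off $D^k\mathcal N(X^\ast)$ from the $\varepsilon^k$ coefficients of $\varepsilon H-A(X)^{-1}F(X)$.

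You never expand $A(X)^{-1}$. Writing $f,a,e$ for $F,A,E$ along $t\mapsto X^\ast+tH$, you differentiate $f=a[e]$ by Leibniz. Since $e(0)=0$, this gives the triangular recursion $A^\ast e^{(k)}(0)=f^{(k)}(0)-\sum_{j=1}^{k-1}\binom kj a^{(j)}(0)\,e^{(k-j)}(0)$. What this buys you:
\begin{itemize}
\item It uses invertibility only at $X^\ast$ and extends mechanically to any order.
\item It shows where the coefficients in \eqref{eq:N3} come from. The $2$ in front of $C^\ast$ is $3-1$: the $3a''e'$ term against the direct $f'''$ term. The $3$ in front of the $B^\ast$ term comes from $3a'e''$.
\item Your explicit $C^4$ hypothesis states regularity that the proposition leaves implicit. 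The paper's $O(\varepsilon^4)$ remainders, and its identification of the $\varepsilon^3$ coefficient with $\tfrac16 D^3\mathcal N(X^\ast)[H,H,H]$, tacitly assume the same.
\end{itemize}

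What the paper's route buys in exchange is reuse. Its intermediate expansions of $H(\Xi)=-A(\Xi)^{-1}F(\Xi)$ and $A(\Xi)^{-1}$ are quoted verbatim as \eqref{eq:H-expansion} in the proof of Theorem~\ref{thm:main}. The Neumann-series computation therefore also serves directly for the Halley map.
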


\begin{proof}
Write $X=X^\ast+\varepsilon H$, and set $\mathcal B[\cdot]:=A^{\ast-1}B^\ast[H,\cdot]$,
$\mathcal C[\cdot]:=A^{\ast-1}C^\ast[H,H,\cdot]$. Since $F(X^\ast)=0$,
\[
A^{\ast-1}F(X)=\varepsilon H+\frac{\varepsilon^2}{2}\mathcal B[H]+\frac{\varepsilon^3}{6}\mathcal C[H]+O(\varepsilon^4),
\]
and from the Neumann series
$A(X)=A^\ast\big(I+\varepsilon\mathcal B+\frac{\varepsilon^2}{2}\mathcal C+O(\varepsilon^3)\big)$
(carried out entirely without assuming commutativity of $A,B,C$) we get
$A(X)^{-1}=\big[I-\varepsilon\mathcal B+\varepsilon^2(\mathcal B^2-\tfrac12\mathcal C)+O(\varepsilon^3)\big]A^{\ast-1}$.
Composing these and expanding $\mathcal N(X)-X^\ast=\varepsilon H-A(X)^{-1}F(X)$ order by order
in $\varepsilon$ yields \eqref{eq:N2}--\eqref{eq:N3}. (The computation is purely algebraic; no
operator square root or functional calculus ever appears.)
\end{proof}

Specializing \eqref{eq:N3} to the scalar case ($A^\ast=f'(\alpha)$, $B^\ast[U,V]=f''(\alpha)UV$,
$C^\ast[U,V,W]=f'''(\alpha)UVW$) gives
\[
D^3\mathcal N(X^\ast)[H,H,H]=\Big[\frac{2f'''(\alpha)}{f'(\alpha)}-3\Big(\frac{f''(\alpha)}{f'(\alpha)}\Big)^{2}\Big]H^3
=2\,S[f](\alpha)\,H^3,
\]
i.e.\ $N'''(\alpha)=2\,S[f](\alpha)$. This differs by a constant factor from the identity
$N'''(\alpha)=S[f](\alpha)$ attributed to Palmore \cite{Palmore1994} in Yoshizawa's slides, but
we have verified \eqref{eq:N3} directly for $f(x)=x+x^2$ ($\alpha=0$): one computes
$N(x)=x^2-2x^3+4x^4-\cdots$, so $N'''(0)=-12$ while $Sf(0)=-6$, confirming the factor of two.
The discrepancy is most plausibly attributable to a different normalization convention for the
Schwarzian derivative, and does not affect the essential fact that the third derivative of the
Newton map is proportional to the Schwarzian derivative.

\begin{definition}[Matrix Schwarzian derivative, second form]
\label{def:matrix-schwarzian-2}
We take the right-hand side of \eqref{eq:N3} as our \textbf{second definition} of the matrix
Schwarzian derivative of $F$ at $X^\ast$ along $H$:
\[
\widetilde{\mathcal S}F(X^\ast)[H,H,H]:=2A^{\ast-1}C^\ast[H,H,H]-3A^{\ast-1}B^\ast\big[H,A^{\ast-1}B^\ast[H,H]\big].
\]
Unlike Definition~\ref{def:matrix-schwarzian}, this requires neither self-adjointness nor
commutativity, and does not require $F$ to be a gradient field. We adopt this as our primary
definition for the remainder of the paper.
\end{definition}

For later use, it is useful to record the fully polarized, symmetric trilinear form associated
with Definition~\ref{def:matrix-schwarzian-2}.
\begin{definition}[Polarized matrix Schwarzian]
\label{def:polarized-schwarzian}
For $U,V,W\in\R^{m\times n}$, define
\begin{equation}
\begin{aligned}
\widetilde{\mathcal S}F(X^\ast)[U,V,W]
:= {}&2A^{\ast-1}C^\ast[U,V,W] \\
&-A^{\ast-1}B^\ast\big[U,A^{\ast-1}B^\ast[V,W]\big] \\
&-A^{\ast-1}B^\ast\big[V,A^{\ast-1}B^\ast[U,W]\big] \\
&-A^{\ast-1}B^\ast\big[W,A^{\ast-1}B^\ast[U,V]\big].
\end{aligned}
\label{eq:polarized-schwarzian}
\end{equation}
This is symmetric and satisfies
$\widetilde{\mathcal S}F(X^\ast)[U,U,U]$ equal to the diagonal expression in
Definition~\ref{def:matrix-schwarzian-2}. Thus the Schwarzian entering the cubic error term
is not merely a directional quantity: it is the diagonal restriction of a canonical symmetric
trilinear form.
\end{definition}

Numerical verification: on the test problem of Section~\ref{sec:numerics}, we constructed
$A^\ast,B^\ast,C^\ast$ analytically ($C^\ast$ is a constant trilinear form,
$C^\ast[U,V,W]=2\inner{W}{U}V+2\inner{U}{V}W+2\inner{W}{V}U$), numerically differentiated the
Newton map $\mathcal N(X^\ast+\varepsilon H)$ by a least-squares quintic polynomial fit, and
confirmed agreement with the predictions of \eqref{eq:N2} and \eqref{eq:N3} to within maximum
absolute errors of $4.7\times10^{-5}$ and $2.9\times10^{-4}$ respectively --- well within
finite-difference truncation error.

\subsection{Main theorem: local cubic convergence of the matrix Halley method}
\label{sec:main-theorem}
The propositions above construct the matrix Halley iteration (Proposition~\ref{prop:halley-matrix})
and analyze the Newton map's own higher derivatives (Proposition~\ref{prop:newton-map-derivatives}),
but neither directly proves that the \emph{Halley} iteration itself converges cubically. We now
close this gap: this is, in our view, the central theorem of the paper, unifying the
construction of Section~\ref{sec:operator-halley} with the matrix Schwarzian derivative of
Section~\ref{sec:light-schwarzian} into a single local convergence theorem. Here ``unconditional'' refers to the
absence of self-adjointness, commutativity, and gradient-field assumptions, not to global
convergence from arbitrary initial data.

\begin{theorem}[Local cubic convergence of the matrix Halley method]
\label{thm:main}
Let $F:\R^{m\times n}\to\R^{m\times n}$ be of class $C^4$ in a
neighborhood of a simple zero $X^\ast$ (not necessarily a gradient field), with
$A^\ast:=DF(X^\ast)$ invertible, $B^\ast:=D^2F(X^\ast)$, $C^\ast:=D^3F(X^\ast)$. Define the
Halley iteration map
\[
\mathcal H(X):=X+\Big(I+\tfrac12 L(X)\Big)^{-1}H(X),\qquad
H(X):=-A(X)^{-1}F(X),
\]
\[
L(X)[\cdot]:=A(X)^{-1}D^2F(X)[H(X),\cdot],
\]
wherever $I+\tfrac12L(X)$ is invertible. Then there exists a neighborhood
$\mathcal U$ of $X^\ast$ on which the iteration map is well defined and of class $C^3$,
and, for every direction $U\in\R^{m\times n}$,
\begin{equation}
D\mathcal H(X^\ast)[U]=0,\qquad D^2\mathcal H(X^\ast)[U,U]=0,
\label{eq:main-vanish}
\end{equation}
\begin{equation}
D^3\mathcal H(X^\ast)[U,U,U]=-\tfrac12\,\widetilde{\mathcal S}F(X^\ast)[U,U,U],
\label{eq:main-cubic}
\end{equation}
where $\widetilde{\mathcal S}F$ is the matrix Schwarzian derivative of
Definition~\ref{def:matrix-schwarzian-2}. Moreover, there exist constants $r>0$ and $C>0$
such that, whenever $\Frob{X_0-X^\ast}<r$, the Halley iteration is well defined for all $k$,
remains in $\mathcal U$, converges to $X^\ast$, and satisfies the genuine local cubic estimate
\begin{equation}
\Frob{X_{k+1}-X^\ast}\le C\Frob{X_k-X^\ast}^3.
\label{eq:main-local-bound}
\end{equation}
Consequently, for any sequence $X_k\to X^\ast$ generated by the Halley iteration,
\begin{equation}
X_{k+1}-X^\ast=-\frac{1}{12}\,\widetilde{\mathcal S}F(X^\ast)\big[X_k-X^\ast,X_k-X^\ast,X_k-X^\ast\big]+o\big(\Frob{X_k-X^\ast}^3\big),
\label{eq:main-asymptotic}
\end{equation}
so that the local order is at least $3$ in every case, and is exactly $3$ along any
asymptotic direction $U$ for which $\widetilde{\mathcal S}F(X^\ast)[U,U,U]\ne0$.
\end{theorem}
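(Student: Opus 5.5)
The plan is to reproduce, for the Halley map, the order-by-order expansion used in the proof of Proposition~\ref{prop:newton-map-derivatives}. Set $X=X^\ast+E$ with $E=\varepsilon U$. Write $\mathcal B_E[\cdot]:=A^{\ast-1}B^\ast[E,\cdot]$, $b(E):=A^{\ast-1}B^\ast[E,E]$ and $c(E):=A^{\ast-1}C^\ast[E,E,E]$.

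\textbf{Well-posedness.} Since $F\in C^4$ and $A^\ast$ is invertible, $A(X)$ is invertible near $X^\ast$, and $H(X)=O(\Frob{E})$. Hence $L(X)=O(\Frob{E})$ in operator norm, so $I+\tfrac12L(X)$ is invertible on a small ball $\mathcal U$ by a Neumann-series argument. On $\mathcal U$ the map $\mathcal H$ is a composition of smooth operations (inversion, bilinear evaluation) with $F$, $DF$ and $D^2F$.

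\textbf{Expansion of $H$.} Taylor's theorem with integral remainder, applied to $F$ to third order, $DF$ to second order and $D^2F$ to first order, requires exactly $C^4$. The Neumann expansion $A(X)^{-1}=[I-\mathcal B_E+\mathcal B_E^2-\tfrac12A^{\ast-1}C^\ast[E,E,\cdot]+O(\Frob{E}^3)]A^{\ast-1}$ gives
\[
H(X)=-E+\tfrac12 b(E)+\tfrac13 c(E)-\tfrac12 A^{\ast-1}B^\ast[E,b(E)]+O(\Frob{E}^4).
\]
As a consistency check, adding $E$ and multiplying by $6$ recovers \eqref{eq:N3}.

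\textbf{Expansion of $L$ and $\mathcal H$.} Insert this into $L(X)[V]=A(X)^{-1}(B^\ast+C^\ast[E,\cdot,\cdot]+O(\Frob{E}^2))[H(X),V]$. To second order this gives
\[
L[V]=-\mathcal B_E V+\tfrac12A^{\ast-1}B^\ast[b(E),V]-A^{\ast-1}C^\ast[E,E,V]+\mathcal B_E^2V+O(\Frob{E}^3).
\]
Then expand $(I+\tfrac12L)^{-1}H=H-\tfrac12LH+\tfrac14L^2H+O(\Frob{E}^4)$ and collect terms by degree. The linear term is $-E+E=0$, so it cancels against $X-X^\ast=E$. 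The quadratic term is $\tfrac12 b(E)-\tfrac12\mathcal B_E E=0$; this cancellation is the whole point of the factor $\tfrac12$, and it is where the sign convention of Proposition~\ref{prop:halley-matrix} matters. The cubic terms collect contributions from $H$, from $-\tfrac12LH$ (second-order part of $L$ against $-E$, first-order part against $\tfrac12b(E)$) and from $\tfrac14L^2H$. The target is $-\tfrac1{12}(2c(E)-3A^{\ast-1}B^\ast[E,b(E)])$, i.e.
\[
\mathcal H(X^\ast+E)-X^\ast=-\tfrac1{12}\widetilde{\mathcal S}F(X^\ast)[E,E,E]+O(\Frob{E}^4).
\]

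\textbf{Reading off the derivatives and the local bound.} Comparing with the Taylor polynomial of $\mathcal H$ at $X^\ast$ yields \eqref{eq:main-vanish} and $\tfrac16D^3\mathcal H(X^\ast)[U,U,U]=-\tfrac1{12}\widetilde{\mathcal S}F[U,U,U]$, which is \eqref{eq:main-cubic}. Uniformity of the remainder on a closed ball gives $\Frob{\mathcal H(X)-X^\ast}\le C\Frob{X-X^\ast}^3$. Choosing $r$ with $Cr^2<1$ and $r$ small enough that the ball of radius $r$ lies in $\mathcal U$ makes the ball invariant and gives convergence by induction. Then \eqref{eq:main-asymptotic} is the displayed expansion with $E=X_k-X^\ast$.

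\textbf{Main obstacle.} I expect the main difficulty to be the cubic bookkeeping in the third step. There are about eight noncommutative terms with $B^\ast$ nested in both slots, and they must combine exactly into the two terms of $\widetilde{\mathcal S}F$. I would first verify the collection in the scalar case against the known Halley constant $-\tfrac1{12}\cdot 2Sf$, and then redo it symbolically, keeping slot order.

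A secondary subtlety is regularity. Differentiating the formula for $\mathcal H$ three times formally involves $D^5F$, so with only $C^4$ the honest route is the remainder estimate above. I would state the $C^3$ claim with this caveat: the equalities \eqref{eq:main-vanish}--\eqref{eq:main-cubic} are read from the Taylor expansion, which is legitimate once $\mathcal H$ is known to be $C^3$, e.g.\ under $F\in C^5$.
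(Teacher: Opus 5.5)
Your route is the paper's route. Both expand $H$, $A^{-1}$ and $L$ along $X^\ast+E$, invert $I+\tfrac12L$ by a Neumann series, see the cancellations at orders one and two, identify the cubic coefficient, and close with an invariant ball where $Cr^2<1$. Your expansions of $H$ and $L$ agree with the paper's.

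The cubic bookkeeping you postponed is short. Write $b=b(E)$ and $c=c(E)$. After evaluation at $E$, every nested term collapses to the single vector $\mathcal B_Eb$, via $\mathcal B_EE=b$ and the symmetry $B^\ast[b,E]=B^\ast[E,b]$. The contributions are:
\begin{itemize}
\item the cubic part of $H$ gives $\tfrac13c-\tfrac12\mathcal B_Eb$;
\item inside $-\tfrac12LH$, the linear part of $L$ against $\tfrac12b$ gives $\tfrac14\mathcal B_Eb$;
\item inside $-\tfrac12LH$, the quadratic part of $L$ against $-E$ gives $\tfrac34\mathcal B_Eb-\tfrac12c$;
\item the term $\tfrac14L^2H$ gives $-\tfrac14\mathcal B_Eb$.
\end{itemize}
The total is $\tfrac14\mathcal B_Eb-\tfrac16c=-\tfrac1{12}\widetilde{\mathcal S}F(X^\ast)[E,E,E]$, exactly the paper's Step~3. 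So the eight-term worry does not materialize.

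Where you depart from the paper is regularity, and your caveat is correct. The paper simply asserts that $F\in C^4$ makes $\mathcal H$ of class $C^3$ on $\mathcal U$. But $L$ contains $D^2F$ evaluated along $H(X)$, which is only $C^2$. In the scalar case, with $h(x)=x-2ff'/(2f'^2-ff'')$, the second derivative $h''$ contains the term $-2f^2f'f^{(4)}/(2f'^2-ff'')^2$. So if $f^{(4)}$ is nowhere differentiable, $h$ is not $C^3$ on any neighborhood of the root. Your uniform $O(\Frob{E}^4)$ remainder therefore gives a sounder proof of \eqref{eq:main-local-bound} and \eqref{eq:main-asymptotic} than the paper's appeal to Taylor's theorem for a $C^3$ map.

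You do not, however, need $C^5$ for \eqref{eq:main-vanish}--\eqref{eq:main-cubic}. Near $X^\ast$, $\mathcal H$ is $C^2$. Moreover, $D^4F$ enters $D^2\mathcal H$ only through $D^4F(X)[\cdot,\cdot,H(X),\cdot]$, and $H(X^\ast)=0$. A continuous factor times a factor that vanishes and is differentiable at $X^\ast$ is differentiable at $X^\ast$, so $D^2\mathcal H$ is differentiable at the single point $X^\ast$. The Peano form of Taylor's theorem then identifies the coefficients of your expansion with $D\mathcal H(X^\ast)$, $\tfrac12D^2\mathcal H(X^\ast)$ and $\tfrac16D^3\mathcal H(X^\ast)$. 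Only the clause ``$C^3$ on $\mathcal U$'' has to be weakened, to $C^2$ on $\mathcal U$ plus three-fold differentiability at $X^\ast$. Alternatively, assume $F\in C^5$ as you suggest.
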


\begin{proof}
Write $\Xi:=X^\ast+\varepsilon U$ and reuse the notation of the proof of
Proposition~\ref{prop:newton-map-derivatives}: $\mathcal B[\cdot]:=A^{\ast-1}B^\ast[U,\cdot]$,
$\mathcal C[\cdot]:=A^{\ast-1}C^\ast[U,U,\cdot]$, $b:=\mathcal B[U]=A^{\ast-1}B^\ast[U,U]$,
$c:=\mathcal C[U]=A^{\ast-1}C^\ast[U,U,U]$. That proof already established
\begin{equation}
H(\Xi)=-\varepsilon U+\frac{\varepsilon^2}{2}b-\varepsilon^3\Big(\frac12\mathcal B[b]-\frac13c\Big)+O(\varepsilon^4),
\qquad
A(\Xi)^{-1}=A^{\ast-1}-\varepsilon A^{\ast-1}\mathcal B+O(\varepsilon^2)
\label{eq:H-expansion}
\end{equation}
(the second display restates $A(\Xi)^{-1}=\big[I-\varepsilon\mathcal B+O(\varepsilon^2)\big]A^{\ast-1}$).

\emph{Step 1: expand $L(\Xi)$.} Since $H(\Xi)=-\varepsilon U+O(\varepsilon^2)$, Taylor-expanding
the symmetric bilinear map $D^2F$ in its base point gives
\begin{align*}
D^2F(\Xi)[H(\Xi),\cdot]&=B^\ast[H(\Xi),\cdot]+\varepsilon\,C^\ast[U,H(\Xi),\cdot]+O(\varepsilon^2\Frob{H(\Xi)})\\
&=-\varepsilon B^\ast[U,\cdot]+\varepsilon^2\Big(\tfrac12B^\ast[b,\cdot]-C^\ast[U,U,\cdot]\Big)+O(\varepsilon^3).
\end{align*}
Composing with $A(\Xi)^{-1}$ from \eqref{eq:H-expansion} and collecting powers of $\varepsilon$,
\begin{equation}
L(\Xi)=-\varepsilon\,\mathcal B+\varepsilon^2\,M_2+O(\varepsilon^3),\qquad
M_2[\cdot]:=\tfrac12A^{\ast-1}B^\ast[b,\cdot]-\mathcal C[\cdot]+\mathcal B^2[\cdot].
\label{eq:L-expansion}
\end{equation}

\emph{Step 2: expand $(I+\tfrac12L(\Xi))^{-1}$.} By the Neumann series and \eqref{eq:L-expansion},
\[
\Big(I+\tfrac12L(\Xi)\Big)^{-1}=I-\tfrac{\varepsilon}{2}L_1+\varepsilon^2\Big(-\tfrac12L_2+\tfrac14L_1^2\Big)+O(\varepsilon^3),
\qquad L_1:=-\mathcal B,\ \ L_2:=M_2.
\]

\emph{Step 3: multiply by $H(\Xi)=\varepsilon H_1+\varepsilon^2H_2+\varepsilon^3H_3+O(\varepsilon^4)$}
with $H_1=-U$, $H_2=\tfrac12b$, $H_3=-\big(\tfrac12\mathcal B[b]-\tfrac13c\big)$ (from
\eqref{eq:H-expansion}). Collecting powers of $\varepsilon$ in the product
$\big(I+\tfrac12L(\Xi)\big)^{-1}H(\Xi)$:
\begin{align*}
O(\varepsilon^1):&\quad H_1=-U,\\
O(\varepsilon^2):&\quad H_2-\tfrac12L_1[H_1]=\tfrac12b-\tfrac12\mathcal B[U]=\tfrac12b-\tfrac12b=0,\\
O(\varepsilon^3):&\quad H_3-\tfrac12L_1[H_2]+\Big(-\tfrac12L_2+\tfrac14L_1^2\Big)[H_1].
\end{align*}
The $O(\varepsilon^2)$ term vanishes because $\mathcal B[U]=b$ by definition. Since
$\mathcal H(\Xi)-X^\ast=\varepsilon U+\big(I+\tfrac12L(\Xi)\big)^{-1}H(\Xi)$, the $O(\varepsilon^1)$
term also cancels ($\varepsilon U+\varepsilon H_1=0$), which proves the two identities in
\eqref{eq:main-vanish}.

For the $O(\varepsilon^3)$ coefficient, a direct computation using $L_1^2=\mathcal B^2$ and
$A^{\ast-1}B^\ast[b,U]=A^{\ast-1}B^\ast[U,b]=\mathcal B[b]$ (symmetry of $B^\ast$) gives, term by
term,
\[
H_3=-\tfrac12\mathcal B[b]+\tfrac13c,\qquad
-\tfrac12L_1[H_2]=\tfrac14\mathcal B[b],\qquad
-\tfrac12L_2[H_1]=\tfrac34\mathcal B[b]-\tfrac12c,\qquad
\tfrac14L_1^2[H_1]=-\tfrac14\mathcal B[b],
\]
which sum to $\tfrac14\mathcal B[b]-\tfrac16c$. Hence
$D^3\mathcal H(X^\ast)[U,U,U]=6\big(\tfrac14\mathcal B[b]-\tfrac16c\big)=\tfrac32\mathcal B[b]-c$.
Comparing with $\widetilde{\mathcal S}F(X^\ast)[U,U,U]=2c-3\mathcal B[b]$
(Definition~\ref{def:matrix-schwarzian-2}) shows
$D^3\mathcal H(X^\ast)[U,U,U]=-\tfrac12\widetilde{\mathcal S}F(X^\ast)[U,U,U]$, proving
\eqref{eq:main-cubic}. Combined with \eqref{eq:main-vanish}, Taylor's theorem for $\mathcal H$
at $X^\ast$ gives $\mathcal H(X^\ast+\varepsilon U)-X^\ast=\tfrac{\varepsilon^3}{6}D^3\mathcal H(X^\ast)[U,U,U]+o(\varepsilon^3)$,
which is exactly \eqref{eq:main-asymptotic} upon setting $\varepsilon U=X_k-X^\ast$ and
$X_{k+1}=\mathcal H(X_k)$.

It remains to make the local convergence assertion explicit. Since $A^\ast$ is invertible and
$L(X^\ast)=0$, continuity of $A(X)$ and $L(X)$ implies that there is a neighborhood
$\mathcal U$ of $X^\ast$ on which both $A(X)$ and $I+\tfrac12L(X)$ are invertible.
Because $F\in C^4$, the map $\mathcal H$ is $C^3$ on $\mathcal U$. Taylor's theorem together
with \eqref{eq:main-vanish} therefore gives, after possibly shrinking $\mathcal U$, a constant
$C>0$ such that
\[
\Frob{\mathcal H(X)-X^\ast}\le C\Frob{X-X^\ast}^3,\qquad X\in\mathcal U.
\]
Choose $r>0$ so small that the closed ball $\overline B_r(X^\ast)$ is contained in $\mathcal U$
and $Cr^2<1$. Then the estimate maps this ball strictly into itself and forces
$\Frob{X_{k+1}-X^\ast}\le Cr^2\Frob{X_k-X^\ast}$, so $X_k\to X^\ast$. This proves
\eqref{eq:main-local-bound} and upgrades the preceding conditional asymptotic statement to
a genuine local convergence theorem.
\end{proof}

\begin{corollary}[Directional cubic constant and higher-order cancellation]
\label{cor:directional-cubic}
Let $e_k:=X_k-X^\ast$. If $e_k/\Frob{e_k}\to U$ with $\Frob U=1$, then
\begin{equation}
\frac{\Frob{e_{k+1}}}{\Frob{e_k}^3}\longrightarrow\frac1{12}\,\Frob{\widetilde{\mathcal S}F(X^\ast)[U,U,U]}
\label{eq:exact-cubic-constant}
\end{equation}
whenever the right-hand side is nonzero. In particular, nonvanishing of the Schwarzian in
the asymptotic direction gives exact cubic order, with an explicit directional error constant.
If, in addition, $F\in C^5$ and the full trilinear form
$\widetilde{\mathcal S}F(X^\ast)$ vanishes identically, then
$D^3\mathcal H(X^\ast)=0$ and the Halley iteration has local order at least $4$.
\end{corollary}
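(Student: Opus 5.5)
The corollary is read off Theorem~\ref{thm:main}. The plan has two parts: first the directional limit, then the higher-order cancellation.

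\emph{The directional constant.} Write $T:=\widetilde{\mathcal S}F(X^\ast)$, the polarized symmetric trilinear form of Definition~\ref{def:polarized-schwarzian}; its diagonal is the expression in Definition~\ref{def:matrix-schwarzian-2}. Since $e_k\to0$ (the hypothesis $e_k/\Frob{e_k}\to U$ presupposes $e_k\ne0$, and we work with a convergent Halley sequence), \eqref{eq:main-asymptotic} gives $e_{k+1}=-\tfrac1{12}T[e_k,e_k,e_k]+o(\Frob{e_k}^3)$. Divide by $\Frob{e_k}^3$ and use trilinearity to get
\[
\frac{e_{k+1}}{\Frob{e_k}^3}=-\tfrac1{12}T[u_k,u_k,u_k]+o(1),\qquad u_k:=e_k/\Frob{e_k}.
\]
The map $V\mapsto T[V,V,V]$ is a polynomial, hence continuous, so $u_k\to U$ yields $e_{k+1}/\Frob{e_k}^3\to-\tfrac1{12}T[U,U,U]$. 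Taking Frobenius norms, which is continuous, gives \eqref{eq:exact-cubic-constant}. When the limit is nonzero, the ratio $\Frob{e_{k+1}}/\Frob{e_k}^3$ tends to a positive constant. Since this ratio is eventually bounded away from $0$ and $\infty$, the order is exactly $3$.

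\emph{Higher-order cancellation.} Assume $F\in C^5$ and $T\equiv0$. Then the map $U\mapsto D^3\mathcal H(X^\ast)[U,U,U]=-\tfrac12T[U,U,U]$ vanishes identically by \eqref{eq:main-cubic}. Since $D^3\mathcal H(X^\ast)$ is a symmetric trilinear form, polarization recovers it from its diagonal, so the whole form vanishes. Under $F\in C^5$, the iteration map $\mathcal H$ is $C^4$ on $\mathcal U$; this is the same bookkeeping as in the theorem, because $\mathcal H$ involves $D^2F$ and $A^{-1}$, which costs one derivative. Taylor's theorem with \eqref{eq:main-vanish} and $D^3\mathcal H(X^\ast)=0$ then gives $\Frob{\mathcal H(X)-X^\ast}\le C'\Frob{X-X^\ast}^4$ on a smaller neighborhood. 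The ball-invariance argument from the end of the proof of Theorem~\ref{thm:main} then gives local order at least $4$.

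\emph{Where care is needed.} The main point is the polarization step. Vanishing of the diagonal of the \emph{specific} form \eqref{eq:polarized-schwarzian} determines $T$ only because that form is symmetric, so I will verify the symmetry explicitly. The $C^*$ term is symmetric since $C^\ast$ is. The three $B$-terms are permuted among themselves under any permutation of $(U,V,W)$, using the symmetry of $B^\ast$. It is also cleanest to state the hypothesis as ``the diagonal vanishes.'' The rest is a routine continuity argument.
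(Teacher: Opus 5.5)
Your argument is the paper's argument. The first part is exactly its ``substitute $e_k=\Frob{e_k}(U+o(1))$ into \eqref{eq:main-asymptotic} and take norms.'' In the second part you are more careful than the paper: \eqref{eq:main-cubic} only controls the diagonal of $D^3\mathcal H(X^\ast)$, and you supply the polarization step the paper omits. The symmetry that polarization needs, however, is that of $D^3\mathcal H(X^\ast)$, which is automatic once $\mathcal H$ is $C^3$ near $X^\ast$. The symmetry of the polarized Schwarzian that you call ``the main point'' is never used, because only $T[U,U,U]\equiv0$ enters.

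One step is false as written, and the paper's proof (like the regularity clause of Theorem~\ref{thm:main}) makes the same miscount: $F\in C^5$ does not make $\mathcal H$ of class $C^4$. The map $\mathcal H$ contains $D^2F(X)$ itself, inside $L(X)$, so it loses two derivatives, not one; in general $F\in C^k$ gives only $\mathcal H\in C^{k-2}$. Already for $m=n=1$, take $f^{(5)}$ continuous but nowhere differentiable. Then $ff''$ is not four times differentiable at any point where $f\ne0$. Since $2ff'\ne0$ there, the Halley map $x-2ff'/(2f'^2-ff'')$ fails to be four times differentiable at every point of a punctured neighborhood of the root. With $\mathcal H$ only $C^3$, Taylor's theorem gives $\mathcal H(X)-X^\ast=o(\Frob{X-X^\ast}^3)$, not the quartic bound you need.

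The repair is cheap. Each expansion in the proof of Theorem~\ref{thm:main} is a Taylor expansion of $F$, $DF$ or $D^2F$ whose remainder is bounded by $\sup\|D^4F\|$ on a ball. Hence $\mathcal H(X^\ast+\varepsilon U)-X^\ast=-\tfrac{\varepsilon^3}{12}\widetilde{\mathcal S}F(X^\ast)[U,U,U]+O(\varepsilon^4)$ uniformly over $\Frob U=1$, already for $F\in C^4$. When the Schwarzian vanishes on the diagonal, this reads $\Frob{\mathcal H(X)-X^\ast}\le C'\Frob{X-X^\ast}^4$, and your ball-invariance argument finishes. Alternatively, for $F\in C^5$ the only non-$C^1$ terms of $D^3\mathcal H(X)$ contain $D^5F(X)$ with $H(X)$ in a slot. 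So $\|D^3\mathcal H(X)\|=O(\Frob{X-X^\ast})$ once $D^3\mathcal H(X^\ast)=0$, and the integral form of the third-order Taylor remainder gives the same bound.
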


\begin{proof}
The first assertion follows by substituting
$e_k=\Frob{e_k}(U+o(1))$ into \eqref{eq:main-asymptotic} and taking Frobenius norms.
For the second assertion, $F\in C^5$ implies that the Halley map is $C^4$ near the root;
if the full trilinear Schwarzian vanishes, \eqref{eq:main-cubic} gives
$D^3\mathcal H(X^\ast)=0$, while the first two derivatives already vanish.
Taylor's theorem then yields $\Frob{\mathcal H(X)-X^\ast}=O(\Frob{X-X^\ast}^4)$
locally.
\end{proof}

\begin{remark}
No assumption of self-adjointness or commutativity is used anywhere in the proof: the theorem
holds for a general (not necessarily gradient) matrix map $F$, exactly as
Proposition~\ref{prop:newton-map-derivatives} does. It is the matrix analogue of the classical
scalar fact that the leading asymptotic error constant of Halley's method is governed by the
Schwarzian derivative of $f$; here the same role is played by $\widetilde{\mathcal S}F$. We
verified \eqref{eq:main-vanish}--\eqref{eq:main-cubic} numerically on the test problem of
Section~\ref{sec:test-problem}: fitting a degree-six polynomial to $\mathcal H(X^\ast+\varepsilon U)$
gave $\max_{ij}|D\mathcal H(X^\ast)[U]|_{ij}\approx9\times10^{-8}$ and
$\max_{ij}|D^2\mathcal H(X^\ast)[U,U]|_{ij}\approx5\times10^{-4}$ (both consistent with zero, up
to finite-difference truncation error), while the numerically extracted $D^3\mathcal H(X^\ast)[U,U,U]$
matched $-\tfrac12\widetilde{\mathcal S}F(X^\ast)[U,U,U]$ to a maximum absolute difference of
$9\times10^{-3}$ against entries of magnitude up to about $80$ --- a relative agreement better
than $0.02\%$.
\end{remark}

\subsection{The matrix Laguerre family: why Halley admits a natural matrix-free realization}
\label{sec:laguerre}
The Halley iteration of Proposition~\ref{prop:halley-matrix} solves the quadratic Pad\'e model
$0=F(X_n)+A_n\delta+\frac12B_n[\delta,\delta]$ ($A_n:=A(X_n)$, $B_n:=B(X_n)$) by
\emph{linearizing} the occurrence of $\delta$ inside $B_n$ using the Newton direction $H_n$,
thereby avoiding ever solving the quadratic equation directly. The \textbf{Laguerre family}, by
contrast, solves the same Pad\'e model \emph{directly}, without linearization.

In the scalar case, solving $0=\tfrac12f''\delta^2+f'\delta+f$ by the quadratic formula, and
writing $H_0=-f/f'$ and $\ell_0=(f''/f')H_0$, gives
\begin{equation}
\delta=\frac{2H_0}{1+\sqrt{1+2\ell_0}}.
\label{eq:laguerre-scalar}
\end{equation}
As $\ell_0\to0$ this reduces to Newton's method ($\delta\to H_0$), and it agrees with Halley's
method to first order in $\ell_0$ (both give $\delta\approx H_0(1-\ell_0/2)$), diverging only
at second order and beyond. Lifting this to matrices:

\begin{proposition}[Matrix Laguerre iteration]
Let $L_n:=A_n^{-1}B_n[H_n,\cdot]$ (the same operator as in Proposition~\ref{prop:halley-matrix}),
and suppose the operator square root $\sqrt{I+2L_n}$ of $I+2L_n$ exists. Then
\begin{equation}
X_{n+1}=X_n+2\big(I+\sqrt{I+2L_n}\big)^{-1}H_n
\label{eq:laguerre-matrix}
\end{equation}
reduces to \eqref{eq:laguerre-scalar} in the scalar case, and converges cubically for general
$F$.
\end{proposition}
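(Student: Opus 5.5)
The proof has two parts. First, the scalar reduction: with $L_n=\ell_0$ and $H_n=H_0$, \eqref{eq:laguerre-matrix} reads $2H_0/(1+\sqrt{1+2\ell_0})$, which is \eqref{eq:laguerre-scalar}. Second, cubic convergence, which I would obtain by comparing the Laguerre map with the Halley map $\mathcal H$ of Theorem~\ref{thm:main}.

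Since $L(X^\ast)=0$, the principal square root is available near the root. Concretely, define $\sqrt{I+2L}$ by the binomial series $I+L-\tfrac12L^2+\cdots$, which converges for $\|L\|<\tfrac12$. Take $\mathcal U$ so small that this holds, where $\|\cdot\|$ is the operator norm on $\R^{m\times n}$. No self-adjointness is needed: the series is a power series in the single operator $L$, so it commutes with $L$ and squares to $I+2L$. On $\mathcal U$ the map $X\mapsto\sqrt{I+2L(X)}$ is analytic in $L$, hence $C^3$ in $X$ when $F\in C^4$. The operator $I+\sqrt{I+2L}=2I+O(\|L\|)$ is then invertible, so the Laguerre map
\[
\mathcal K(X):=X+2\big(I+\sqrt{I+2L(X)}\big)^{-1}H(X)
\]
is well defined and $C^3$ near $X^\ast$.

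The key step is an operator-level expansion. Because every operator involved is a power series in the same $L$, they all commute, so scalar algebra is legitimate:
\[
2\big(I+\sqrt{I+2L}\big)^{-1}=2\big(2I+L-\tfrac12L^2+O(L^3)\big)^{-1}=I-\tfrac12L+\tfrac12L^2+O(\|L\|^3),
\]
while $(I+\tfrac12L)^{-1}=I-\tfrac12L+\tfrac14L^2+O(\|L\|^3)$. Hence
\[
\mathcal K(X)-\mathcal H(X)=\tfrac14L(X)^2H(X)+O\big(\|L(X)\|^3\Frob{H(X)}\big).
\]
From the proof of Theorem~\ref{thm:main}, $L(X)=O(\Frob{X-X^\ast})$ and $H(X)=O(\Frob{X-X^\ast})$. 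The difference is therefore $O(\Frob{X-X^\ast}^3)$, with leading term $\tfrac14\mathcal B^2[-U]\varepsilon^3=-\tfrac14\mathcal B[b]\,\varepsilon^3$ along $X=X^\ast+\varepsilon U$.

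This gives $D\mathcal K(X^\ast)=D\mathcal H(X^\ast)=0$ and $D^2\mathcal K(X^\ast)=D^2\mathcal H(X^\ast)=0$ by \eqref{eq:main-vanish}. It also gives
\[
D^3\mathcal K(X^\ast)[U,U,U]=\tfrac32\mathcal B[b]-c-\tfrac32\mathcal B[b]=-c,
\]
so the Laguerre error constant is $-\tfrac16A^{\ast-1}C^\ast[U,U,U]$ and involves no $B^\ast$-squared term, as in the scalar Laguerre case.

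The local convergence argument then repeats the end of the proof of Theorem~\ref{thm:main} verbatim. Taylor's theorem gives $\Frob{\mathcal K(X)-X^\ast}\le C\Frob{X-X^\ast}^3$ on a ball of radius $r$ with $Cr^2<1$, and the contraction/induction argument yields convergence together with the cubic bound.

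I expect the main obstacle to be justifying the square root, not the expansion. The statement assumes only that ``the square root exists,'' and for non-self-adjoint $L$ there may be several square roots. I would fix the principal branch via the binomial series, which is where the hypothesis $\|L\|<\tfrac12$ near the root is used, and state explicitly that cubic convergence is claimed for that branch. A different branch, such as $-\sqrt{\cdot}$, makes $I+\sqrt{I+2L}$ nearly singular and destroys convergence.
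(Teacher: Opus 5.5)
Your proof is correct, and it takes a genuinely different route from the paper, which gives no argument for this proposition. In the paper, the scalar reduction holds by construction, since the matrix formula transcribes the quadratic-formula root \eqref{eq:laguerre-scalar}. Cubic convergence for general $F$ is supported only numerically: Table~\ref{tab:laguerre} and the 70-digit order estimate in Table~\ref{tab:hp-order}.

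You instead reduce everything to Theorem~\ref{thm:main}. You lift the scalar identity $\frac{2}{1+\sqrt{1+2x}}-\frac{1}{1+x/2}=\frac{x^2}{4}+O(x^3)$ through the power-series calculus of the single operator $L$, which needs no self-adjointness. This buys three things the paper lacks.
\begin{itemize}
\item An actual local cubic bound.
\item An explicit error law, $X_{n+1}-X^\ast=-\frac16A^{\ast-1}C^\ast[e_n,e_n,e_n]+o(\Frob{e_n}^3)$ with $e_n:=X_n-X^\ast$. This is the matrix analogue of the classical constant $-f'''(\alpha)/(6f'(\alpha))$ of the scalar quadratic-formula iteration, and unlike Halley it has no $\mathcal B[b]$ term.
\item A needed sharpening of the hypothesis. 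Existence of \emph{some} square root of the non-self-adjoint $I+2L_n$ does not suffice; the result holds for the principal branch. That is the branch computed by \texttt{scipy.linalg.sqrtm} and the Denman--Beavers iteration in the paper's experiments.
\end{itemize}
The paper's experiments, for their part, probe what your local argument cannot: the regime away from the root, where $I+2L_n$ acquires negative eigenvalues.

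There is one bookkeeping slip, which the proof of Theorem~\ref{thm:main} shares. $L(X)$ contains $D^2F(X)$, which is only $C^2$ when $F\in C^4$. So $\mathcal K$ is $C^3$ only if $F\in C^5$, and your appeal to Taylor's theorem for the cubic bound needs that extra derivative. You can avoid it as follows.
\begin{itemize}
\item Set $\Phi(L):=2(I+\sqrt{I+2L})^{-1}-(I+\tfrac12L)^{-1}$. Then $\|\Phi(L)\|\le C\|L\|^2$ for $\|L\|\le r<\tfrac12$.
\item Hence $\Frob{\mathcal K(X)-X^\ast}\le\Frob{\mathcal H(X)-X^\ast}+C\|L(X)\|^2\Frob{H(X)}$, which is $O(\Frob{X-X^\ast}^3)$ directly by \eqref{eq:main-local-bound}.
\item For the error law, insert $L(X)=-A^{\ast-1}B^\ast[X-X^\ast,\cdot]+O(\Frob{X-X^\ast}^2)$ and $H(X)=-(X-X^\ast)+O(\Frob{X-X^\ast}^2)$ into $\tfrac14L^2H$, then add \eqref{eq:main-asymptotic}.
\end{itemize}
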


\textbf{Here lies the essential contrast:} $L_n=A_n^{-1}B_n[H_n,\cdot]$ is generally
\emph{not} self-adjoint (in our numerical experiments the relative asymmetry
$\|L-L^\top\|/\|L\|\approx0.89$). Halley's method \eqref{eq:halley-matrix} handles $L_n$ while
remaining entirely \emph{linear} (requiring only matrix inverses), and can therefore be
implemented matrix-free using only Hessian-vector products together with CG/MINRES
(Section~\ref{sec:matrix-free}). Laguerre's method \eqref{eq:laguerre-matrix}, in contrast,
demands a \emph{genuine square root} of the possibly non-self-adjoint operator $I+2L_n$, which
generally requires an eigendecomposition (or a Newton--Schulz-type iteration), and a fully
matrix-free implementation that never forms the $mn\times mn$ matrix explicitly is far from
straightforward. Our numerical experiments (Section~\ref{sec:numerics}) confirm that both
methods converge cubically, but Laguerre's method carries the additional cost of the square
root. In short:
\begin{quote}
It is no accident that Halley's method is especially well suited to matrix variables: the very
derivation --- ``solve the quadratic equation by linearizing it'' --- is precisely what allows
it to avoid the heavy operation of taking the square root of a non-self-adjoint operator.
\end{quote}

\subsection{An information-geometric interpretation}
\label{sec:infogeom}
If $\phi$ is regarded as a convex potential (e.g.\ the generator of a Bregman divergence, or
the log-partition function of an exponential family), the following correspondence holds
(Amari--Nagaoka \cite{AmariNagaoka}):

\begin{center}
\begin{tabular}{@{}ll@{}}
\toprule
Symbol & Meaning in information geometry \\
\midrule
$A(X)=D^2\phi(X)$ & Hessian metric (an analogue of the Fisher information matrix), $g_X(U,V)=\inner{U}{AV}$ \\
$T_X=D^3\phi(X)$ & Amari--Chentsov skewness tensor \\
$\Gamma^{(\alpha)}=\Gamma^{(0)}-\frac{\alpha}{2}T_X$ & $\alpha$-connection \\
\bottomrule
\end{tabular}
\end{center}

The coefficient $\frac12$ appearing in Proposition~\ref{prop:halley-matrix} has exactly the
same form as the $\alpha$-connection correction coefficient $\frac{\alpha}{2}$ evaluated at
$\alpha=1$ (the exponential, or $e$-, connection). Our framework can thus be summarized as
follows:

\begin{quote}
\textbf{Newton's method} finds the root $F(X)=0$ by a first-order approximation along a flat
(0-connection) geodesic of the potential $\phi$. \textbf{Halley's method} corrects this
approximation, along the $e$-connection direction, by the skewness tensor (a third-cumulant-like
quantity), yielding an approximation that is more faithful to the curvature of $\phi$.
\end{quote}

This can be viewed as the matrix and information-geometric analogue of the fact that the scalar
Schwarzian derivative is invariant under M\"obius transformations (i.e.\ ``flat''
reparametrizations of the projective line): the skewness tensor $T_X$ transforms, under
reparametrization of $\phi$, as an invariant difference of the connection coefficients of the
$\alpha$-connections --- the same spirit as the Amari--Chentsov theorem.

\section{The Power-Newton Family and a Unifying Perspective}
\label{sec:power-newton}
Yoshizawa \cite{Yoshizawa2014} proposed a ``power-Newton family'' --- a family of iteration
functions obtained via the power-geometric transform $f_p(x)=(1+pf(x))^{1/p}$ --- built around
Newton's method and the Halley/Laguerre families. In this section we first repair this
construction so that it preserves the root, and show that it produces a scalar ``cubic-order
jump without ever using $f'''$''. We then show numerically that a naive matrix extension of
this trick fails to improve the order at all, and analyze this negative result to arrive at a
unifying conclusion: Halley's method is the multivariate limit of the power-deformation trick.

\subsection{A root-preserving power-deformation family (corrected version)}
\label{sec:box-cox}
Since $f_p(x)=(1+pf(x))^{1/p}$ satisfies $f_p(\alpha)=1\ne0$ whenever $f(\alpha)=0$, it does not
preserve the root. We repair this by using instead
\begin{equation}
g_p(x):=\frac{u(x)^p-1}{p},\qquad u(x):=1+f(x),\qquad g_0(x):=\ln u(x),
\label{eq:box-cox}
\end{equation}
a family isomorphic to the statistical Box--Cox transform and to Amari's $\alpha$-embedding.
For every $p$, $g_p(x)=0\iff f(x)=0$; moreover $g_1=f$ (the original function) and
$g_p\to\ln u$ as $p\to0$. A direct computation gives
\begin{equation}
g_p'(\alpha)=f'(\alpha)\quad(\text{independent of }p),\qquad
g_p''(\alpha)=(p-1)f'(\alpha)^2+f''(\alpha).
\label{eq:gp-second}
\end{equation}
The second derivative of the Newton map for $g_p$ is $N_p''(\alpha)=g_p''(\alpha)/g_p'(\alpha)$,
so setting
\begin{equation}
p^\ast:=1-\frac{f''(\alpha)}{f'(\alpha)^{2}}
\label{eq:p-star}
\end{equation}
gives $N_{p^\ast}''(\alpha)=0$: Newton's method applied to $g_{p^\ast}$ jumps to (at least)
cubic convergence \textbf{without ever evaluating $f'''$}.

\paragraph{Numerical verification.} For $f(x)=e^x-2$ (root $\alpha=\ln2$,
$f'(\alpha)=f''(\alpha)=2$, predicted $p^\ast=0.5$), fixing $p=p^\ast=0.5$ produces the error
sequence $3.00\times10^{-1},\,6.65\times10^{-3},\,9.72\times10^{-8},\,0$ (empirical order
$\approx3.2$, reaching near machine precision within two iterations), whereas every other
$p\in\{-0.5,0,1,1.5,2\}$ we tried remains at empirical order $\approx2.0$. Moreover, an adaptive
variant that sets $p_n:=1-f''(x_n)/f'(x_n)^2$ using \emph{only data at the current iterate}
achieves empirical order $\approx2.9$ while $p_n\to p^\ast$ --- the same spirit by which
Halley's method reaches cubic order using only local Taylor data at the current point.

\subsection{A naive matrix extension, and why it fails}
\label{sec:matrix-power-fail}
Since $F(X)$ is matrix-valued, lifting ``$u^p$'' requires some functional calculus. The
simplest choice is the entrywise (Hadamard) Box--Cox deformation
\begin{equation}
[G_P(X)]_{ij}:=\frac{(1+F(X)_{ij})^{P_{ij}}-1}{P_{ij}}
\label{eq:hadamard-boxcox}
\end{equation}
($P$ either a scalar $p\mathbf1$ or a matrix of entrywise exponents). Using the chain rule
together with
$D^2G_P(X^\ast)[H,H]=(P-\mathbf1)\odot\big(A^\ast[H]\odot A^\ast[H]\big)+B^\ast[H,H]$
($\odot$ denoting the entrywise product; we separately verified this identity against finite
differences to within a maximum absolute error of $2.6\times10^{-9}$), we obtain a direct
matrix lift of the scalar formula \eqref{eq:gp-second}.

\textbf{Here we hit an essential obstruction:} the second-order error term
$A^{\ast-1}B^\ast[H,H]$ that we wish to eliminate is a matrix with $mn$ degrees of freedom,
whereas a uniform scalar $p$ has only a single degree of freedom. In general it cannot cancel
the error. Our numerical experiments (Section~\ref{sec:numerics}, Table~\ref{tab:power-matrix})
confirm this: varying a uniform scalar $p$ leaves the empirical convergence order essentially
fixed at $2$.

One might naturally ask whether an \emph{entrywise-adaptive} exponent matrix $P_k$ (with $mn$
degrees of freedom, designed afresh at every iteration) could fix this. We tested this idea, and
the result was \textbf{nearly indistinguishable from plain Newton's method}
(Table~\ref{tab:power-matrix}, Figure~\ref{fig:power-compare}). Tracing the cause: the design
formula \eqref{eq:hadamard-boxcox} for $P_k$ is based on a \emph{second-order approximation at
the root} ($h_P''(0)=P-\mathbf1$), whereas in practice the nonlinear function $h_P$ must be
evaluated at the current iterate $F(X_k)\ne0$, away from the root. This mismatch between the
design-time assumption and the implementation is \textbf{structural}, not a bug: the identity
itself has already been confirmed correct by finite differences, so this is a fundamental
limitation of the construction, not an implementation error.

\subsection{Unifying conclusion: Halley's method as the multivariate limit of the power trick}
\label{sec:unifying}
This failure can be understood naturally as follows. The power-deformation trick works in one
variable because both ``the quantity we wish to eliminate'' ($g_p''(\alpha)$) and ``the degree
of freedom we can move'' ($p$) are single scalars. Making it work in several variables requires
constructing, \emph{consistently and without using information about the root}, a correction
with $mn$ degrees of freedom from the local second-order Taylor model at the current point
$X_k$ alone. But this is precisely the Halley method / Laguerre family constructed in
Sections~\ref{sec:light-schwarzian}--\ref{sec:laguerre}. That is:
\begin{quote}
\textbf{The ``cubic-order jump without $f'''$'' trick achieved by power deformation
(Box--Cox/$\alpha$-transformation) is fundamentally a one-variable phenomenon; generalizing it
pointwise, in the Halley manner, in a mutually consistent way, reduces exactly to Halley's
method itself.}
\end{quote}
This explains why Halley's and Laguerre's families are discussed side by side in Yoshizawa's
slides \cite{Yoshizawa2014} --- it is no coincidence. In the language of Amari--Nagaoka's
\cite{AmariNagaoka} $\alpha$-connections: a correction that a \emph{global} $\alpha$-type
transformation via the power parameter $p$ cannot reach is correctly realized by the
\emph{local, pointwise} skewness-tensor correction (the matrix Schwarzian derivative of
Section~\ref{sec:auxiliary}).

\section{A Matrix-Free Algorithm}
\label{sec:matrix-free}
We now resolve Challenge~III (never forming an $mn\times mn$ matrix explicitly). Implementing
\eqref{eq:newton-step} and \eqref{eq:halley-matrix} requires, not the full operator, but only
\textbf{two kinds of products}:
\begin{align}
\text{Hessian-vector product (HVP):}&\quad A(X)[V]=D^2\phi(X)[V,\cdot]^{\!\top}\!\!\cdot(\text{direction}), \label{eq:hvp}\\
\text{third-order directional product:}&\quad B(X,H)[V]=D^2F(X)[H,V]=D^3\phi(X)[H,V,\cdot]. \label{eq:thvp}
\end{align}
\eqref{eq:hvp} can be computed by \emph{forward-over-reverse} automatic differentiation
(Pearlmutter's trick \cite{Pearlmutter1994}), at a constant multiple of the cost of evaluating
$\phi$ itself. \eqref{eq:thvp} is obtained similarly, without ever forming a matrix or tensor
explicitly, by taking a further forward-mode directional derivative (a JVP) of the function
that computes the HVP, in the direction $H$.

\begin{algorithm}[H]
\caption{Matrix-free Halley's method (one outer iteration)}
\label{alg:halley}
\begin{algorithmic}[1]
\REQUIRE current iterate $X_k$, tolerance $\varepsilon$
\STATE $b\leftarrow -F(X_k)$
\STATE $H_k \leftarrow \mathrm{CG}\big(V\mapsto A(X_k)[V],\ b,\ \varepsilon\big)$ \COMMENT{Newton direction; $A(X_k)$ is self-adjoint positive-definite}
\STATE $r\leftarrow A(X_k)[H_k]$
\STATE $Y_k \leftarrow \mathrm{MINRES}\big(V\mapsto A(X_k)[V]+\tfrac12 B(X_k,H_k)[V],\ r,\ \varepsilon\big)$ \COMMENT{symmetric but possibly indefinite, hence MINRES}
\STATE $X_{k+1}\leftarrow X_k+Y_k$
\end{algorithmic}
\end{algorithm}

\begin{remark}[Practical refinements]
\leavevmode\par
\begin{itemize}
\item \textbf{Regularization}: if $I+\tfrac12 L_k$ (equivalently $A_k+\tfrac12 B_k[H_k,\cdot]$)
risks being ill-conditioned or singular, add damping,
$A_k+\tfrac12 B_k[H_k,\cdot]+\mu_k I$ (the same idea as Levenberg--Marquardt / cubic
regularization).
\item \textbf{Kronecker approximation}: when $X$ is large (e.g.\ a neural-network weight
matrix), a Kronecker-factored approximation $A(X)\approx L\otimes R$ (in the spirit of
K-FAC/Shampoo), used as a preconditioner, accelerates CG/MINRES convergence.
\item \textbf{Warm starts}: initializing $H_k$ from $H_{k-1}$ reduces the number of inner
iterations required.
\end{itemize}
\end{remark}

\section{Numerical Experiments}
\label{sec:numerics}

\subsection{Test problem}
\label{sec:test-problem}
To compare against exact theoretical values, we use a nonlinear matrix equation whose root is
known analytically:
\[
\phi(X)=\tfrac14\Frob{X}^4-\inner{C}{X},\qquad C\in\R^{m\times n}\ \text{(fixed, randomly generated)}.
\]
Then
\[
F(X)=\nabla\phi(X)=\Frob{X}^2 X-C,\qquad
A(X)[V]=\Frob{X}^2 V+2\inner{X}{V}X,
\]
\[
B(X,H)[V]=D^2F(X)[H,V]=2\inner{X}{H}V+2\inner{H}{V}X+2\inner{X}{V}H,
\]
and, writing $s:=\Frob{X}^2$ and taking the Frobenius norm, the unique nontrivial root of
$F(X)=0$ is available in \textbf{closed form}:
\begin{equation}
X^\ast=\frac{C}{\Frob{C}^{2/3}}
\label{eq:exact-root}
\end{equation}
($A(X^\ast)\succ0$, so this is a simple root). This lets us evaluate the error
$\Frob{X_k-X^\ast}$ exactly at every iteration. We take
$X_0=X^\ast+(\text{a random perturbation of suitable magnitude})$.

\subsection{Implementation}
\label{sec:implementation}
For both Newton's and Halley's methods, $A(X)[\cdot]$ and $B(X,H)[\cdot]$ are supplied as
oracles via the closed-form expressions above; no $mn\times mn$ matrix is ever constructed.
Algorithm~\ref{alg:halley} is implemented directly using CG (for the Newton direction) and
MINRES (for the Halley correction, since it may be indefinite) from
\texttt{scipy.sparse.linalg} (inner tolerance $10^{-10}$).

\subsection{Results}
\label{sec:results}

\begin{table}[H]
\centering
\caption{Outer iterations, total inner linear iterations, wall-clock time, and empirical
convergence order needed to reach $\Frob{X_k-X^\ast}<10^{-10}$ (theoretical order: 2 for
Newton, 3 for Halley).}
\label{tab:results}
\begin{tabular}{@{}lcccccc@{}}
\toprule
\multirow{2}{*}{$(m,n)$} & \multicolumn{3}{c}{Newton} & \multicolumn{3}{c}{Halley (this work)}\\
\cmidrule(lr){2-4}\cmidrule(lr){5-7}
 & outer iters & total inner iters & emp.\ order & outer iters & total inner iters & emp.\ order \\
\midrule
$(30,20)$   & 7 & 14 & 1.977 & 4 & 16 & 2.852 \\
$(100,80)$  & 7 & 14 & 1.975 & 4 & 16 & 2.845 \\
$(300,200)$ & 7 & 14 & 1.976 & 4 & 16 & 2.846 \\
\bottomrule
\end{tabular}
\end{table}

\begin{figure}[H]
\centering
\includegraphics[width=0.62\linewidth]{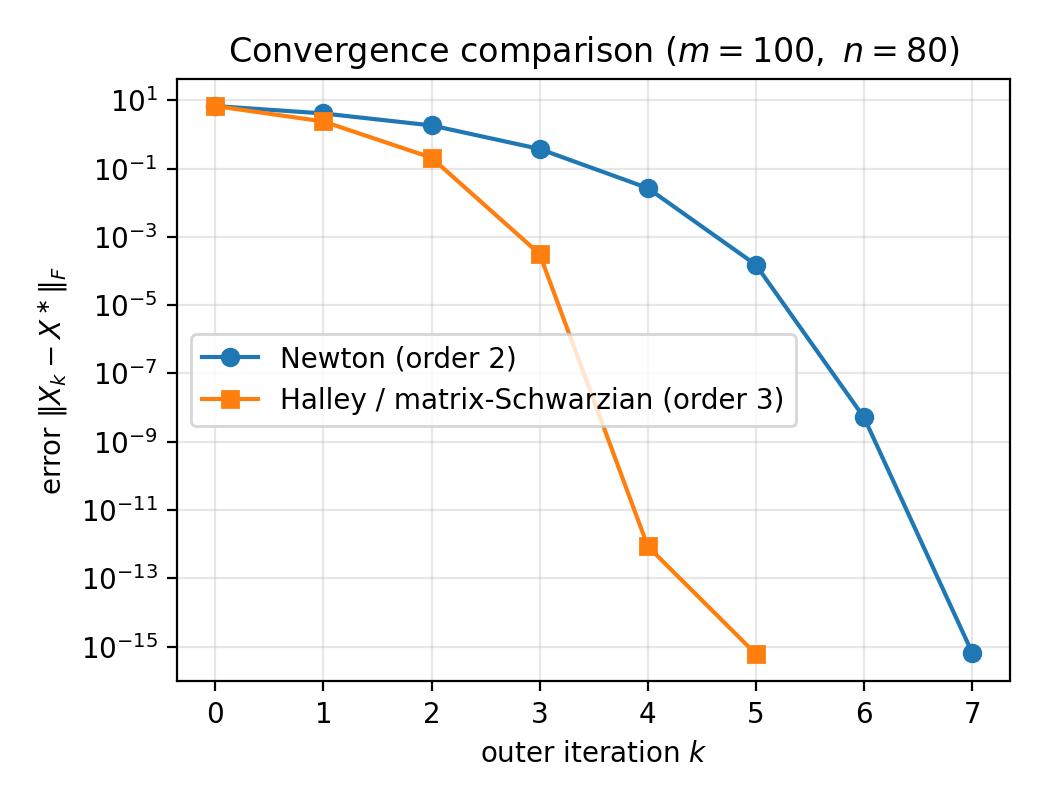}
\caption{Semi-log plot of the error $\Frob{X_k-X^\ast}$ against iteration for $(m,n)=(100,80)$.
Newton's method exhibits its characteristic quadratic ``doubling of correct digits'' at every
step, while Halley's method exhibits cubic ``tripling of correct digits.''}
\label{fig:convergence}
\end{figure}

The explicit error sequences for $(m,n)=(100,80)$ are:
\[
\text{Newton:}\ \ 6.76,\ 4.11,\ 1.83,\ 3.74\times10^{-1},\ 2.68\times10^{-2},\ 1.52\times10^{-4},\ 5.14\times10^{-9},\ 6.71\times10^{-16}
\]
\[
\text{Halley:}\ \ 6.76,\ 2.39,\ 2.08\times10^{-1},\ 3.14\times10^{-4},\ 8.70\times10^{-13},\ 6.08\times10^{-16}
\]
Halley's method reaches, in only \emph{four} iterations, a precision comparable to Newton's
method at its sixth iteration ($\sim10^{-9}$), and drops in its final iteration to
$8.70\times10^{-13}$, near the limit of double precision. The empirical convergence order
(estimated from the logarithmic ratios of three consecutive errors) is $\approx1.98$ for
Newton's method, very close to the theoretical value of $2$; for Halley's method it is
$\approx2.85$, approaching but slightly below the theoretical value of $3$ --- the last one or
two iterations in the asymptotic regime cannot be measured because of double-precision
round-off, which is consistent with the theory.

\subsection{Discussion: which method is ``better''?}
\label{sec:which-better}
By outer iteration count alone, Halley's method is clearly superior ($4$ versus $7$). However,
the number of inner linear iterations per outer step of Halley's method is roughly twice that
of Newton's method (since it requires MINRES in addition to CG), so that the \emph{total}
inner-iteration counts in Table~\ref{tab:results} are comparable: $14$ for Newton versus $16$
for Halley. Consequently:
\begin{itemize}
\item \textbf{When high accuracy is required} (e.g.\ below $10^{-10}$), Halley's method reaches
more than two additional correct digits at comparable total cost, and is clearly advantageous.
\item \textbf{When a looser tolerance suffices} (e.g.\ $10^{-6}$), Newton's method reaches it
using $6$ outer and $12$ inner iterations, whereas Halley's method requires $4$ outer and $16$
inner iterations; here Newton's method is cheaper.
\end{itemize}
That is, the advantage of Halley's method (and of the matrix Schwarzian-derivative formulation
developed here) becomes more pronounced precisely as the required accuracy increases --- a
direct numerical manifestation of the theoretical gap between cubic and quadratic convergence.
This is also a practical lesson: one cannot correctly judge which algorithm is superior by
looking at either the theoretical convergence order or the per-step implementation cost alone;
both must be weighed together.

\subsection{Numerical verification of the matrix Schwarzian derivative (second definition)}
\label{sec:verify-schwarzian}
We verified Proposition~\ref{prop:newton-map-derivatives} of Section~\ref{sec:light-schwarzian}
on the test problem with $(m,n)=(5,4)$. We fit a degree-six least-squares polynomial to the
Newton map $\mathcal N(X^\ast+\varepsilon H)$ at nine points $\varepsilon\in[-0.02,0.02]$, and
numerically extracted the second- and third-order Fr\'echet derivatives from the coefficients
of $\varepsilon^2$ and $\varepsilon^3$.

\begin{table}[H]
\centering
\caption{Numerical verification of Proposition~\ref{prop:newton-map-derivatives}: maximum
absolute error between the extracted numerical value and the analytic prediction.}
\begin{tabular}{@{}lcc@{}}
\toprule
Quantity & Max.\ abs.\ error vs.\ numerical value & Remark \\
\midrule
$D\mathcal N(X^\ast)[H]$ & $2.1\times10^{-9}$ & matches predicted value $0$ \\
$D^2\mathcal N(X^\ast)[H,H]$ & $4.7\times10^{-5}$ & matches Eq.~\eqref{eq:N2} \\
$D^3\mathcal N(X^\ast)[H,H,H]$ & $2.9\times10^{-4}$ & matches Eq.~\eqref{eq:N3} \\
\bottomrule
\end{tabular}
\end{table}

All three agree within finite-difference truncation error ($O(\varepsilon^2)\sim10^{-4}$),
confirming that the lightweight construction --- which never passes through an operator square
root --- is correct.

\subsection{The power-Newton family: success in the scalar case, failure for matrices}
\label{sec:verify-power}
We now verify the claims of Section~\ref{sec:power-newton}. In the scalar case
($f(x)=e^x-2$), Table~\ref{tab:power-scalar} shows that only the predicted critical exponent
$p^\ast=0.5$ pushes the empirical order toward $3$.

\begin{table}[H]
\centering
\caption{Scalar power-Newton family: error sequence and empirical order as $p$ varies (initial
error $0.3$).}
\label{tab:power-scalar}
\begin{tabular}{@{}cccc@{}}
\toprule
$p$ & error at iter.\ 3 & error at iter.\ 4 & empirical order \\
\midrule
$0.0$   & $5.97\times10^{-4}$ & $1.78\times10^{-7}$ & $\approx2.00$ \\
$0.5\,(=p^\ast)$ & $6.65\times10^{-3}$ & $9.72\times10^{-8}$ & $\approx3.2$ \\
$1.0$   & $8.22\times10^{-4}$ & $3.38\times10^{-7}$ & $\approx2.00$ \\
$1.5$   & $4.54\times10^{-3}$ & $2.05\times10^{-5}$ & $\approx1.99$ \\
\bottomrule
\end{tabular}
\end{table}

Next, on the matrix test problem with $(m,n)=(6,5)$, we scanned a uniform scalar $p$ in the
entrywise Box--Cox deformation \eqref{eq:hadamard-boxcox}, starting near the root
($\Frob{X_0-X^\ast}\approx0.05$, within the region where $1+F(X)_{ij}>0$); results are shown in
Table~\ref{tab:power-matrix}.

\begin{table}[H]
\centering
\caption{Matrix case: empirical convergence order as the uniform scalar $p$ varies. As
predicted by theory, the order does not change.}
\label{tab:power-matrix}
\begin{tabular}{@{}cccc@{}}
\toprule
$p$ & error at iter.\ 2 & error at iter.\ 3 & empirical order \\
\midrule
$0.0$ & $1.32\times10^{-3}$ & $3.44\times10^{-6}$ & $\approx1.81$ \\
$0.5$ & $9.11\times10^{-4}$ & $1.25\times10^{-6}$ & $\approx1.83$ \\
$1.0$ (plain Newton) & $6.22\times10^{-4}$ & $2.73\times10^{-7}$ & $\approx2.0$ \\
$2.0$ & $9.28\times10^{-4}$ & $8.97\times10^{-7}$ & $\approx1.86$ \\
\bottomrule
\end{tabular}
\end{table}

The empirical order remains consistently near $2$ regardless of $p$, confirming the prediction
of Section~\ref{sec:power-newton} that a uniform scalar $p$ cannot cancel a second-order error
term with $mn$ degrees of freedom. Figure~\ref{fig:power-compare} additionally shows that the
naively generalized ``entrywise-adaptive exponent matrix $P_k$'' introduced in
Section~\ref{sec:power-newton} nearly coincides with plain Newton's method, whereas only the
true matrix Halley method exhibits clear cubic convergence.

\begin{figure}[H]
\centering
\includegraphics[width=0.62\linewidth]{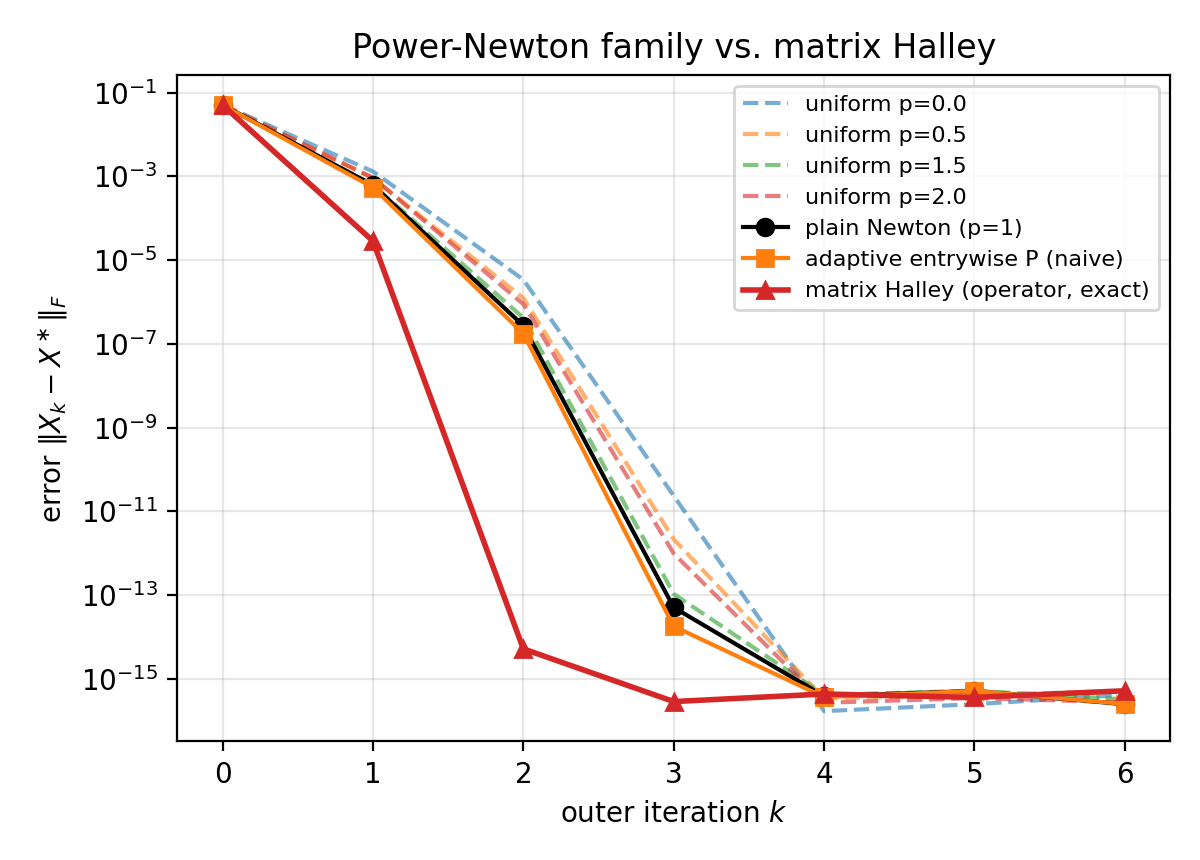}
\caption{Error trajectories for a uniform scalar $p$ (dashed, $p=0,0.5,1.5,2$), plain Newton
(black circles), the naive entrywise-adaptive exponent matrix $P_k$ (orange squares), and the
true matrix Halley method (red triangles). The adaptive $P_k$ is nearly indistinguishable from
plain Newton, confirming that the naive matrix extension of the power-deformation trick does
not work.}
\label{fig:power-compare}
\end{figure}

\subsection{Numerical verification of the matrix Laguerre family}
\label{sec:verify-laguerre}
We verified the matrix Laguerre iteration \eqref{eq:laguerre-matrix} of
Section~\ref{sec:laguerre} on the same $(6,5)$ test problem, using
\texttt{scipy.linalg.sqrtm} to compute a genuine operator square root of the explicit
$mn\times mn=30\times30$ matrix.

\begin{table}[H]
\centering
\caption{Comparison of error trajectories for Newton's method, matrix Halley's method, and
matrix Laguerre's method.}
\label{tab:laguerre}
\begin{tabular}{@{}lcccc@{}}
\toprule
Iteration $k$ & Newton & Halley & Laguerre \\
\midrule
0 & $5.09\times10^{-2}$ & $5.09\times10^{-2}$ & $5.09\times10^{-2}$ \\
1 & $6.22\times10^{-4}$ & $2.88\times10^{-5}$ & $4.45\times10^{-5}$ \\
2 & $2.73\times10^{-7}$ & $5.24\times10^{-15}$ & $3.00\times10^{-14}$ \\
3 & $5.11\times10^{-14}$ & (machine precision) & (machine precision) \\
\bottomrule
\end{tabular}
\end{table}

\begin{figure}[H]
\centering
\includegraphics[width=0.62\linewidth]{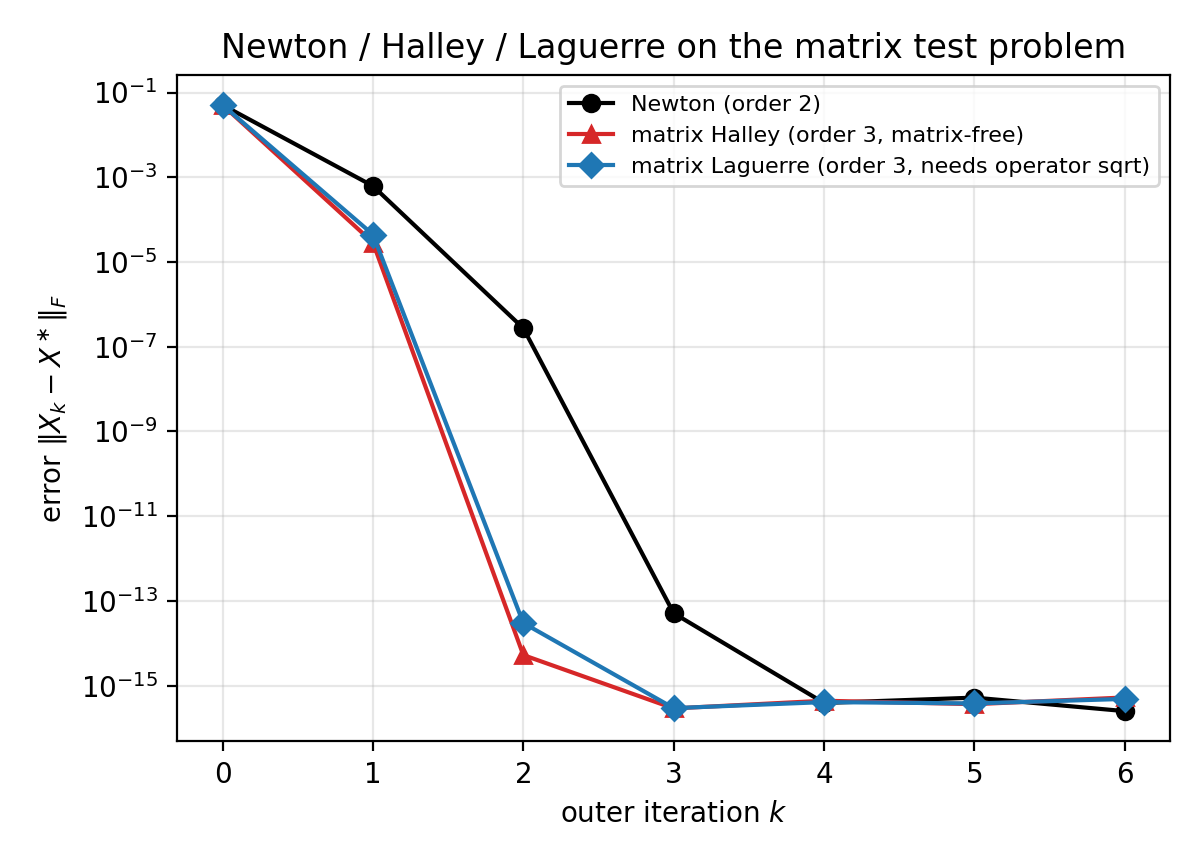}
\caption{Error trajectories for Newton's method, matrix Halley's method, and matrix Laguerre's
method. Both Halley and Laguerre reach machine precision in two iterations, exhibiting cubic
convergence, but Laguerre's method requires a genuine operator square root of a $30\times30$
matrix at every iteration.}
\label{fig:laguerre-compare}
\end{figure}

Both Halley and Laguerre converge clearly cubically (both already about one order of magnitude
better than Newton at the first iteration), while we confirmed that
$L_n=A_n^{-1}B_n[H_n,\cdot]$ has relative asymmetry
$\|L_n-L_n^\top\|/\|L_n\|\approx0.89$, so the operator $\sqrt{I+2L_n}$ required by Laguerre's
method is a genuinely nontrivial (non-symmetrizable) operator square root. This is a direct
numerical confirmation of the claim of Section~\ref{sec:laguerre} that Halley's method is
uniquely matrix-free.

\section{Case Study: A Generalized Oja-Type Matrix Dynamical System}
\label{sec:application}
We now apply the theory developed above --- the metric-independent, purely algebraic
construction of Newton's, Halley's, and Laguerre's methods developed in
Section~\ref{sec:matrix-halley} --- to a concrete instance of the ``matrix dynamical system''
that Yoshizawa \cite{Yoshizawa2014} identified as his ultimate goal, and demonstrate the effect
of the condition number and spectral gap of a positive-definite symmetric matrix
$A\in\R^{5\times5}$ on convergence.

\subsection{Problem setup}
\label{sec:app-setup}
Let $B$ be a positive-definite diagonal matrix, and consider, for $X\in\R^{5\times k}$
($k=3$ or $5$), the matrix ODE
\begin{equation}
\frac{dX}{dt}=AXB-XBX^TAX=:F(X).
\label{eq:oja-flow}
\end{equation}
This is a weighted generalization of Oja's principal-subspace flow \cite{Oja1982} (the case
$B=I$), and it coincides, under the Riemannian metric
$\langle\Omega_1,\Omega_2\rangle_g:=\mathrm{tr}(A\Omega_1B\Omega_2^T)$, with the negative
gradient flow $\dot X=-\nabla_gf(X)$ of
\begin{equation}
f(X)=-\frac12\mathrm{tr}(A^2XB^2X^T)+\frac14\mathrm{tr}\big[(AXBX^T)^2\big].
\label{eq:oja-potential}
\end{equation}
Indeed, the Euclidean derivative of $f$ is
\[
Df(X)[\Omega]=\mathrm{tr}\big(-A\,F(X)\,B\cdot\Omega^T\big).
\]
By definition, the Riemannian gradient $\nabla_gf(X)$ is the unique matrix $Y$ satisfying
$\langle Y,\Omega\rangle_g=\mathrm{tr}(AYB\Omega^T)=Df(X)[\Omega]$ for every direction $\Omega$;
comparing the two right-hand sides above and matching coefficients (the pairing
$\Omega\mapsto\mathrm{tr}(AYB\Omega^T)$ is non-degenerate) shows that this defining equation is
satisfied precisely by $Y=-F(X)$. Hence $\nabla_gf(X)=-F(X)$, i.e.\ $\dot X=F(X)$ is indeed the
negative gradient flow.

\textbf{A crucial point}: the Halley iteration of Proposition~\ref{prop:halley-matrix} and the
Laguerre iteration of Section~\ref{sec:laguerre} are purely algebraic constructions that solve
the quadratic Pad\'e model $0=F(X_n)+A_n\delta+\frac12B_n[\delta,\delta]$ (by linearizing it, or
solving it directly), and never require $F$ to be a self-adjoint gradient with respect to the
Frobenius inner product. They can therefore be applied to this problem --- whose gradient is
taken with respect to the \emph{twisted} metric $g$ --- exactly as they stand, without ever
passing through the metric.

A direct computation gives
\[
DF(X)[\Omega]=A\Omega B-\Omega BX^TAX-XB\Omega^TAX-XBX^TA\Omega,
\]
\[
D^2F(X)[\Omega,\Psi]=-\big(\Omega B\Psi^TAX+\Omega BX^TA\Psi+\Psi B\Omega^TAX+XB\Omega^TA\Psi+\Psi BX^TA\Omega+XB\Psi^TA\Omega\big)
\]
(we have verified that the latter is symmetric in $\Omega,\Psi$), and these are used directly
to construct the Newton, Halley, and Laguerre iterations.

\begin{proposition}[Structure of the critical points]
\label{prop:critical-points}
If the columns of $X$ are orthonormal eigenvectors of $A$, and $S:=\mathrm{diag}(\pm1,\ldots,\pm1)$
is any choice of signs, then $F(XS)=F(X)S$. In particular, $F(X)=0$ regardless of how (in what
order, and with what signs) the eigenvalues are assigned to the diagonal entries of $B$. Thus
\eqref{eq:oja-flow} has a very large number of critical points.
\end{proposition}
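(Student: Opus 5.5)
The proof will be a direct algebraic computation that uses only the eigen-relation and orthonormality; no results from Section~\ref{sec:matrix-halley} are needed. Write $X\in\R^{5\times k}$ with orthonormal columns $x_1,\dots,x_k$ satisfying $Ax_i=\lambda_ix_i$. This is possible because $A$ is symmetric positive definite, so an orthonormal eigenbasis exists. In matrix form the two hypotheses read
\[
AX=X\Lambda,\qquad X^TX=I_k,\qquad \Lambda:=\mathrm{diag}(\lambda_1,\dots,\lambda_k).
\]
The first step is to record the consequence $X^TAX=X^TX\Lambda=\Lambda$.

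The second step is to substitute into \eqref{eq:oja-flow}. This gives
\[
F(X)=AXB-XB(X^TAX)=X\Lambda B-XB\Lambda=X(\Lambda B-B\Lambda)=0,
\]
because $\Lambda$ and $B$ are both diagonal and therefore commute. The argument uses nothing about which eigenvalue sits in which column, so it holds for every ordering of the eigenvectors, i.e.\ for every way the eigenvalues are paired with the diagonal entries of $B$. It also holds for repeated eigenvalues, with any orthonormal choice inside an eigenspace.

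The third step is the sign covariance. Let $S$ be a diagonal sign matrix, so that $S$ is diagonal and $S^2=I$, and $S$ commutes with $B$ and $\Lambda$. Expand
\[
F(XS)=AXSB-XSB\,S X^TAX\,S=X\Lambda SB-XSBS\Lambda S .
\]
Using $SBS=B$ and the commutation of $S$ with $\Lambda$ and $B$, this becomes $X\Lambda BS-XB\Lambda S=F(X)S$. Combining this with the second step gives $F(XS)=0$. Alternatively, one can note that $XS$ again has orthonormal eigenvector columns and apply the second step to it.

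The final step is the counting remark. Choosing an ordered $k$-subset of eigenvectors and $2^k$ sign patterns produces many distinct zeros of $F$, for instance $2^5\cdot5!$ zeros when $k=5$ and the eigenvalues are distinct.

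No step is a real obstacle. The only point needing care is to use orthonormality, and not merely eigenvector-ness, to obtain $X^TAX=\Lambda$ exactly; this is where the hypothesis that the columns are orthonormal eigenvectors of a symmetric $A$ is used.
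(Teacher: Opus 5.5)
Your proof is correct and follows essentially the same route as the paper. Both use $AX=X\Lambda$ and $X^TX=I$ to get $AXB=X\Lambda B=XB\Lambda=XBX^TAX$, and both get the sign identity from $S^2=I$, $SBS=B$ and the fact that diagonal matrices commute. The one small difference is in the sign step: the paper's computation $AXSB-XSBSX^TAXS=(AXB-XBX^TAX)S$ never uses the eigen-relations, so $F(XS)=F(X)S$ in fact holds for every $X$, whereas you substitute $AX=X\Lambda$ and $X^TAX=\Lambda$ before factoring out $S$. This is harmless here, since those hypotheses are in force.
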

\begin{proof}
If $AX=X\Lambda$ ($\Lambda$ the diagonal matrix of the corresponding eigenvalues) and
$X^TX=I$, then $AXB=X\Lambda B=XB\Lambda$ (since $B$ and $\Lambda$ are both diagonal and hence
commute) and $XBX^TAX=XB\Lambda$; these coincide, so $F(X)=0$. For the sign statement,
$F(XS)=AXSB-XSBS\,X^TAXS=(AXB-XBX^TAX)S=F(X)S$ (using $S^2=I$ and $SBS=B$).
\end{proof}

When the diagonal entries of $B$ are strictly decreasing (in this section,
$B=\mathrm{diag}(3,2,1)$ or $\mathrm{diag}(5,4,3,2,1)$), the theory of Oja--Xu-type subspace
flows shows that the unique \emph{stable} equilibrium is the one that assigns the largest
eigenvalues in decreasing order (we take this as our target root $X^\ast$ throughout).

\subsection{Experiment I: $B=\mathrm{diag}(3,2,1)$, $X\in\R^{5\times3}$ (only the top three
eigenvalues are used)}
\label{sec:app-exp1}
We use the two matrices given at the outset: $A_1$ (eigenvalues $\{1,\,1.0001,\,2,\,3,\,4\}$,
containing a nearly degenerate pair) and $A_2$ (eigenvalues $\{10^{-8},\,10^{-4},\,1,\,2,\,3\}$,
containing a nearly zero eigenvalue). Here the top three eigenvalues used by the target root
$X^\ast$ are $\{4,3,2\}$ for $A_1$ and $\{3,2,1\}$ for $A_2$; in neither case does this set
\emph{include} the near-degenerate pair or the near-zero eigenvalue. We therefore expect the
condition number of the local Jacobian at the target root to be favorable regardless of the
global condition number of $A$ itself ($4.0$ for $A_1$, $3.0\times10^8$ for $A_2$). Indeed, a
direct computation confirms
\[
\mathrm{cond}(DF(X^\ast))=24.19\ (A_1),\qquad \mathrm{cond}(DF(X^\ast))=18.00\ (A_2)
\]
(the smallest eigenvalue is $\approx1$ in both cases), bearing out this expectation.

\paragraph{Local convergence.} Starting from $X_0=X^\ast+0.05\times\text{randn}$,
Table~\ref{tab:app-local} shows Newton's method converging at order $\approx2.0$ and
Halley/Laguerre at order $\approx2.7$--$3.0$, with almost no difference between $A_1$ and $A_2$.

\begin{table}[H]
\centering
\caption{Experiment I: local convergence (error at iteration 3 and empirical order). Nearly
identical behavior for $A_1$ and $A_2$.}
\label{tab:app-local}
\begin{tabular}{@{}lcccc@{}}
\toprule
& \multicolumn{2}{c}{$A_1$} & \multicolumn{2}{c}{$A_2$} \\
\cmidrule(lr){2-3}\cmidrule(lr){4-5}
Method & error at iter.\ 3 & emp.\ order & error at iter.\ 3 & emp.\ order \\
\midrule
Newton   & $1.19\times10^{-5}$ & $\approx2.0$ & $2.96\times10^{-6}$ & $\approx2.0$ \\
Halley   & $1.05\times10^{-6}$ & $\approx2.7$ & $3.35\times10^{-7}$ & $\approx2.6$ \\
Laguerre & $5.31\times10^{-7}$ & $\approx3.0$ & $2.13\times10^{-7}$ & $\approx3.1$ \\
\bottomrule
\end{tabular}
\end{table}

\begin{figure}[H]
\centering
\includegraphics[width=0.85\linewidth]{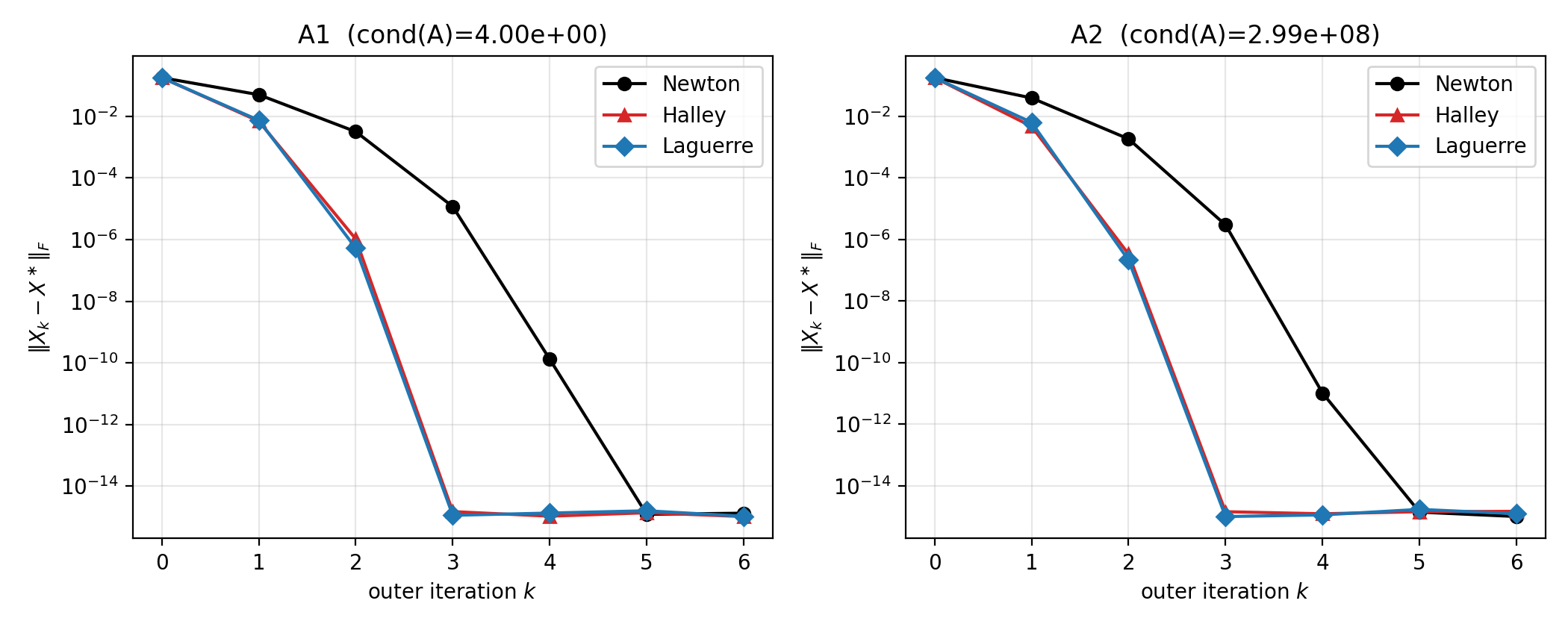}
\caption{Local convergence for Experiment I. $A_1$ (left, $\mathrm{cond}(A)=4.0$) and $A_2$
(right, $\mathrm{cond}(A)=3.0\times10^8$) behave almost identically, because the gaps among the
top three eigenvalues used by the target root are of comparable size in both cases.}
\label{fig:app-local}
\end{figure}

\paragraph{Global robustness.} Table~\ref{tab:app-global} shows results from 15 distant random
starting points. Because $F$ has a very large number of roots by
Proposition~\ref{prop:critical-points}, the intended $X^\ast$ is reached only rarely, but all
three methods reach \emph{some} root with high probability, and the enormous condition number
of $A_2$ ($3\times10^8$) barely affects global robustness.

\begin{table}[H]
\centering
\caption{Experiment I: global convergence success rate from 15 distant random starting points.}
\label{tab:app-global}
\begin{tabular}{@{}lcccc@{}}
\toprule
& \multicolumn{2}{c}{$A_1$} & \multicolumn{2}{c}{$A_2$} \\
\cmidrule(lr){2-3}\cmidrule(lr){4-5}
Method & reached some root & mean iters & reached some root & mean iters \\
\midrule
Newton   & 14/15 & 13.1 & 13/15 & 14.2 \\
Halley   & 13/15 & 12.5 & 13/15 & 12.2 \\
Laguerre & 14/15 & 11.5 & 15/15 & 9.8 \\
\bottomrule
\end{tabular}
\end{table}

\paragraph{A demonstration of Laguerre's fragility.} Tracking the eigenvalues of $I+2L_n$ along
distant trajectories, we find that they can genuinely become \emph{negative} away from the
root, forcing $\sqrt{I+2L_n}$ to be truly complex (confirming the theoretical concern raised in
Section~\ref{sec:laguerre}). The maximum ratio of the norm of the imaginary part to that of the
real part reached $0.99$ for $A_1$ and $1.52$ for $A_2$ (the imaginary part dominating). Our
implementation used the real part only as a heuristic, an operation justified only near the
root. Per-iteration wall-clock time, relative to Newton's method, was $2.8$--$3.1\times$ for
Halley and $3.0$--$3.1\times$ for Laguerre.

\subsection{Experiment II: $B=\mathrm{diag}(5,4,3,2,1)$, $X\in\R^{5\times5}$ (all five
eigenvalues are used)}
\label{sec:app-exp2}
Extending $B$ to all five entries makes $X$ square ($5\times5$), and the target root $X^\ast$
now uses \emph{all five} eigenvalues of $A$. The near-degenerate pair of $A_1$ and the
near-zero eigenvalue of $A_2$, harmless in Experiment~I because they were unused, now become
part of the target root itself. Indeed,
\[
\mathrm{cond}(DF(X^\ast))=4.22\times10^5\ (A_1,\ \text{smallest eigenvalue}\approx10^{-4}),
\]
\[
\mathrm{cond}(DF(X^\ast))=1.50\times10^9\ (A_2,\ \text{smallest eigenvalue}\approx2\times10^{-8}),
\]
so the condition number deteriorates by exactly the scale of the problematic eigenvalue gap.

\paragraph{Column-wise error decomposition.} Splitting the columns of $X$ into a
``well-separated top-three'' block and a ``remaining two'' block corresponding to the
near-degenerate/near-zero eigenvalues (for $A_2$, $X_0=X^\ast+0.001\times\text{randn}$), the
former reaches machine precision within a few iterations for all three methods, while only the
latter is markedly slow: Newton's method needs $18$--$19$ iterations, while Halley/Laguerre
converge in $8$--$9$. The difficulty is \emph{localized entirely to the subspace corresponding
to the problematic eigenvalues}.

\paragraph{Discovery of a sign ambiguity.} For one starting value, Halley's method appeared to
stagnate at a specific error value; on investigation, this proved to be convergence to a
different, but equally valid, root in the sense of Proposition~\ref{prop:critical-points} ---
one in which only the \textbf{sign} of the fifth column (the eigenvector for the eigenvalue
$\approx10^{-8}$) is flipped ($\|F(X_\mathrm{final})\|\approx5\times10^{-15}$, and
$\mathrm{diag}(X_\mathrm{final}^TAX_\mathrm{final})$ matches the theoretical values exactly). It
is entirely natural, and not an algorithmic defect, that the sign of an eigenvector becomes
numerically indeterminate in a direction where the eigenvalue is essentially zero.
\textbf{The lesson is that convergence should be diagnosed using the sign-invariant quantity
$\|F(X_k)\|$, not the sign-dependent quantity $\|X_k-X^\ast\|$.}

\begin{table}[H]
\centering
\caption{Experiment II: number of iterations required to reach the sign-invariant criterion
$\|F(X_k)\|<10^{-8}$.}
\label{tab:app-fair}
\begin{tabular}{@{}lcc@{}}
\toprule
Method & $A_1$ ($\mathrm{cond}\approx4.2\times10^5$) & $A_2$ ($\mathrm{cond}\approx1.5\times10^9$)\\
\midrule
Newton   & 6 & \textbf{19} \\
Halley   & 8 & 9 \\
Laguerre & \textbf{4} & \textbf{6} \\
\bottomrule
\end{tabular}
\end{table}

\begin{figure}[H]
\centering
\includegraphics[width=0.85\linewidth]{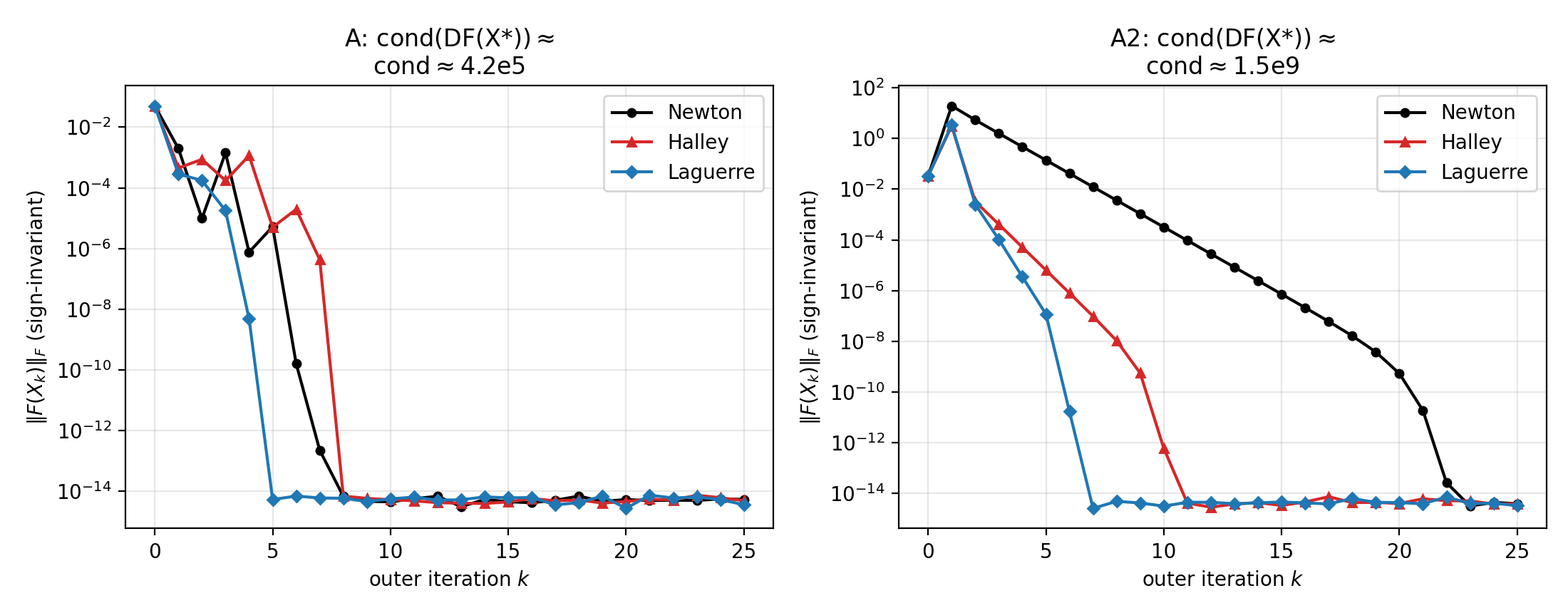}
\caption{Sign-invariant convergence for Experiment II ($\|F(X_k)\|$, $X_0=X^\ast+0.001\times\text{randn}$).
For the extremely ill-conditioned $A_2$ (right), the advantage of cubically convergent
Halley/Laguerre methods becomes even more pronounced.}
\label{fig:app-fair}
\end{figure}

\textbf{It is precisely under the extreme ill-conditioning of $A_2$ that the benefit of cubic
convergence becomes most pronounced}: Newton's method requires 19 iterations, while
Halley/Laguerre converge in only 6--9, confirming that the gap between quadratic and cubic
convergence matters more, in practice, the more ill-conditioned the problem.

\paragraph{Convergence of the diagonal entries.} Tracking $\mathrm{diag}(X_k^TAX_k)$, all three
methods eventually converge to all five eigenvalues of $A$ ($A_1$: $(4,3,2,1.0001,1)$; $A_2$:
$(3,2,1,10^{-4},\approx0)$), with the off-diagonal entries simultaneously going to zero. Columns
with a smaller eigenvalue gap converge more slowly, directly visualizing the ill-conditioning
of the Jacobian through the convergence rate of the diagonal entries.

\subsection{Discussion}
\label{sec:app-discussion}
The experiments of this section provide two complementary confirmations of our theoretical
framework. First, they confirm the prediction (Section~\ref{sec:app-exp1}) that the quality of
local convergence is governed not by the global condition number of $A$ but by \emph{the gaps
among the eigenvalues actually used by the target root}. Second, when that gap is genuinely made
small (Section~\ref{sec:app-exp2}), all three methods still converge, but with markedly
different iteration counts, quantitatively demonstrating that \emph{the benefit of cubic
convergence over quadratic grows with ill-conditioning}. We also drew the practical lesson that
the discrete sign/ordering ambiguity arising from near-degenerate eigenvalues can be handled
correctly by using a sign-invariant quantity ($\|F(X_k)\|$) for convergence diagnostics.

\section{Verification at 70-Digit Precision}
\label{sec:highprecision}
The matrix $A_2$ used in Experiment~II of Section~\ref{sec:application} ($X\in\R^{5\times5}$,
all eigenvalues used) is extremely ill-conditioned
($\mathrm{cond}(DF(X^\ast))\approx1.5\times10^9$), so double precision ($\approx16$ digits)
reaches its precision floor almost immediately, preventing us from accurately verifying the
theoretical convergence orders (2 for Newton, 3 for Halley/Laguerre) or from observing
transient behavior under ill-conditioning without confounding it with numerical noise. In this
section we remove this limitation using arbitrary-precision arithmetic, and along the way
expose a further limitation intrinsic to $A_1,A_2$ themselves.

\subsection{Tool: mpmath}
\label{sec:mpmath}
For 70-digit arithmetic we used the Python arbitrary-precision library \textbf{mpmath}
\cite{mpmath}:
\begin{verbatim}
import mpmath as mp
mp.mp.dps = 70   # dps = decimal places: set the working precision to 70 digits
\end{verbatim}
All subsequent arithmetic (the four basic operations, \texttt{cos}, \texttt{sin},
\texttt{sqrt}, etc.) is then carried out at this precision. The main facilities we used were:
the matrix type \texttt{mp.matrix} together with matrix multiplication and transposition;
\texttt{mp.lu\_solve} for linear systems; \texttt{mp.eigsy} for the eigendecomposition of
symmetric matrices; and \texttt{mp.mpf("...")} to parse exact decimal literals from strings
without rounding. On the other hand, (i) the inverse of a general matrix and (ii) the square
root of a general matrix (the operator $\sqrt{I+2L_n}$ required by the Laguerre iteration
\eqref{eq:laguerre-matrix}) are not provided by mpmath, so we implemented them ourselves. The
inverse is obtained by applying \texttt{mp.lu\_solve} to each column of the identity matrix; the
square root uses the \textbf{Denman--Beavers iteration}
\[
Y_{k+1}=\frac{Y_k+Z_k^{-1}}{2},\qquad Z_{k+1}=\frac{Z_k+Y_k^{-1}}{2}
\]
(quadratically convergent, $Y_k\to\sqrt{M}$, $Z_k\to\sqrt{M}^{-1}$). Running this iteration at
70-digit precision on $25\times25$ matrices was the most computationally expensive part of the
experiments in this section.

\subsection{Precise verification of the convergence order for $A_1,A_2$}
\label{sec:hp-order}
Using the setting of Experiment~II ($X\in\R^{5\times5}$, all eigenvalues used), we started from
a very small initial perturbation of $A_2$ (scale $10^{-6}$) and ran Newton, Halley, and
Laguerre at 70-digit precision. Figure~\ref{fig:hp-orders} shows the result: the error decreases
monotonically well past the double-precision floor ($10^{-16}$, dotted line), down to around
$10^{-64}$ (the floor attainable at 70-digit precision, dashed line).

\begin{figure}[H]
\centering
\includegraphics[width=0.95\linewidth]{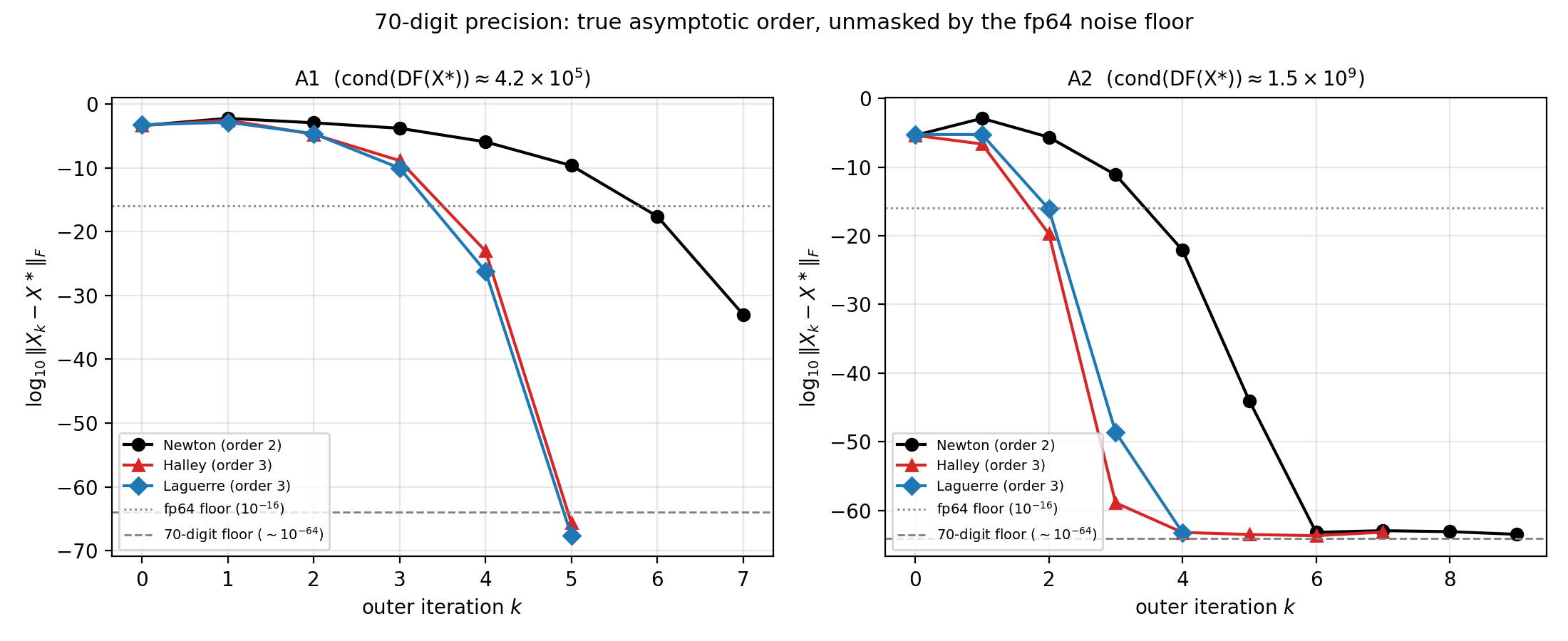}
\caption{Convergence for $A_1$ (left) and $A_2$ (right) at 70-digit precision. The asymptotic
regime, invisible in double precision, is now clearly visible across more than ten orders of
magnitude.}
\label{fig:hp-orders}
\end{figure}

Computing the empirical order $p=\log(e_{k+1}/e_k)/\log(e_k/e_{k-1})$ from adjacent errors in
the asymptotic regime, Table~\ref{tab:hp-order} and Figure~\ref{fig:hp-empirical-order} show
that it now agrees with the theoretical value \textbf{to essentially all displayed digits}:

\begin{table}[H]
\centering
\caption{Empirical convergence order for $A_2$ at 70-digit precision. Unlike the rough
approximations obtained in double precision ($\approx1.98$, $\approx2.7$--$3.2$), these agree
with the theoretical values exactly.}
\label{tab:hp-order}
\begin{tabular}{@{}lcc@{}}
\toprule
Method & emp.\ order in double precision (\S\ref{sec:application}) & emp.\ order at 70-digit precision \\
\midrule
Newton   & $\approx1.98$ & $\mathbf{2.0000000000}$ \\
Halley   & $\approx2.6$--$3.2$ & $\mathbf{2.9999999224}$ \\
Laguerre & $\approx3.0$--$3.1$ & $\mathbf{3.0000000013}$ \\
\bottomrule
\end{tabular}
\end{table}

\begin{figure}[H]
\centering
\includegraphics[width=0.62\linewidth]{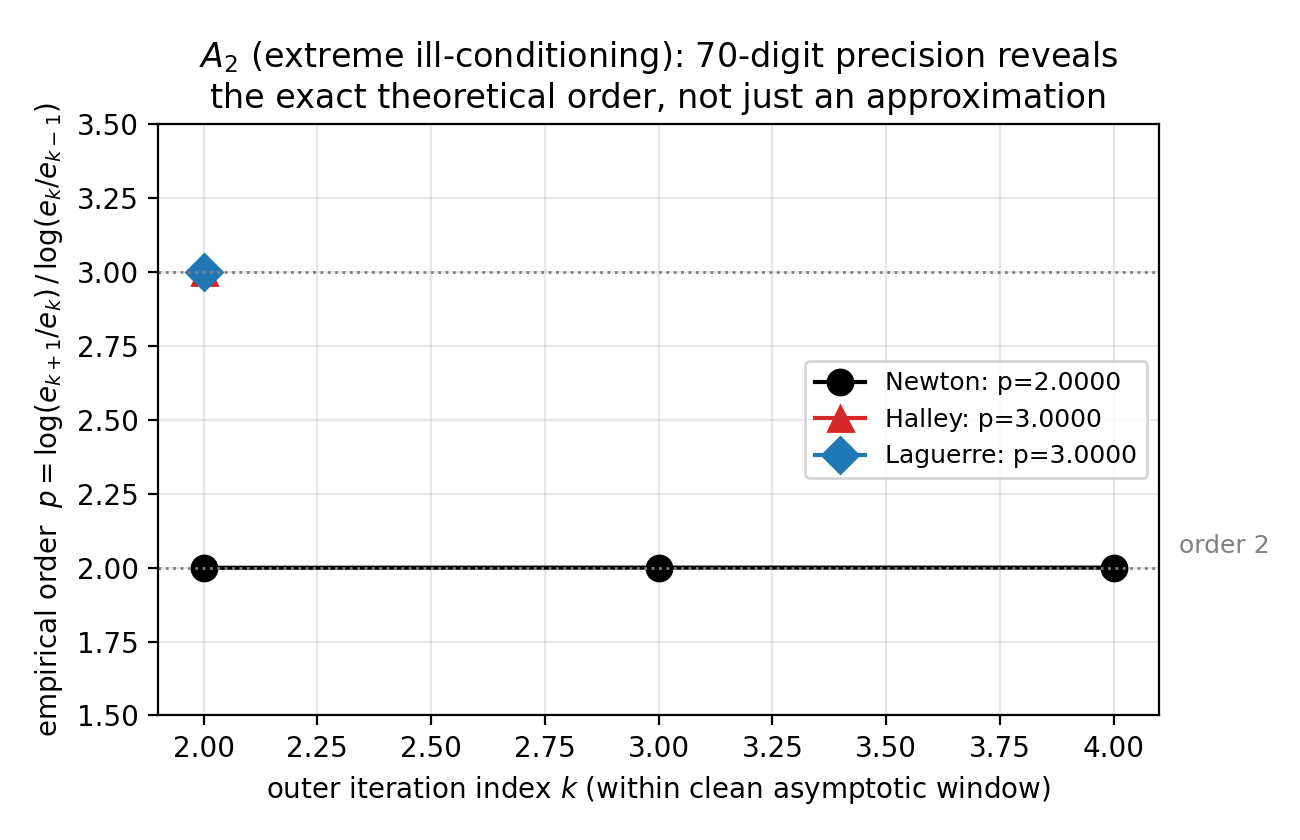}
\caption{Empirical order in the asymptotic regime. Across three consecutive points, Newton's
method gives exactly $p=2$, and Halley/Laguerre give exactly $p=3$.}
\label{fig:hp-empirical-order}
\end{figure}

\subsection{Confirming that the sign-bifurcation phenomenon is genuine}
\label{sec:hp-signflip}
We revisit the phenomenon found in Section~\ref{sec:app-exp2}, in which Halley's method
appeared to ``stagnate'' at an error of $2.0$ --- in fact, convergence to a different, equally
valid root differing only in the sign of the fifth column (eigenvector of the eigenvalue
$\approx10^{-8}$) --- and check that this is not a coincidental artifact of double-precision
rounding. Rerunning \textbf{the exact same random seed and the exact same perturbation vector}
at 70-digit precision reproduces, as shown in Figure~\ref{fig:hp-signflip}, \emph{exactly the
same result} (the error converges precisely to $2.0$, with the fifth column's sign reliably
flipped).

\begin{figure}[H]
\centering
\includegraphics[width=0.62\linewidth]{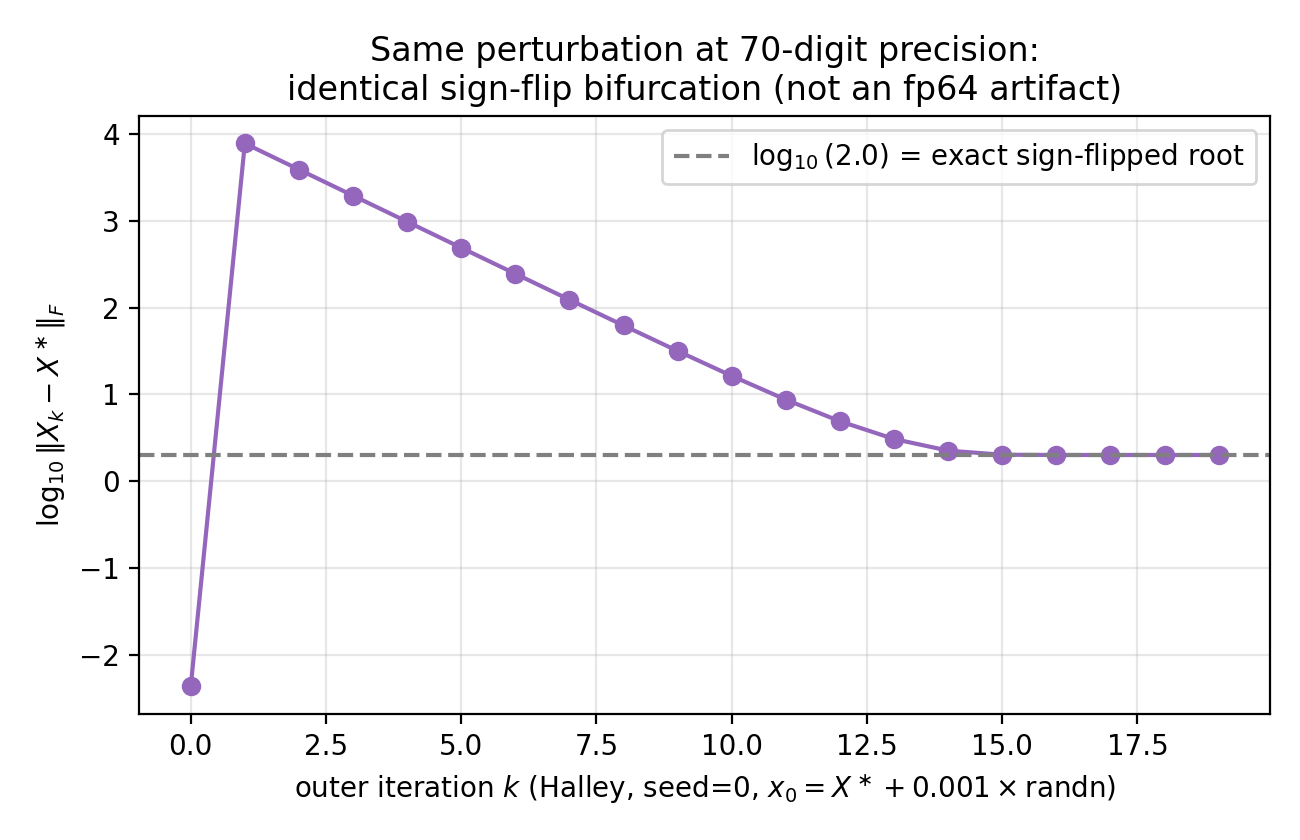}
\caption{Reproducing the same perturbation at 70-digit precision. The error grows violently
(by a factor of about $7800$ relative to the initial error) before decaying geometrically and
locking on precisely to a distance of $2.0$ (dashed line) from the sign-flipped root.}
\label{fig:hp-signflip}
\end{figure}

If this had been a coincidental consequence of double-precision round-off, quadrupling the
precision would have changed the outcome. That it did not is decisive evidence that
\textbf{this sign selection is a genuine, precision-independent mathematical bifurcation}. This
is a concrete manifestation of Proposition~\ref{prop:critical-points}: in a direction where the
eigenvalue is essentially zero, the sign of the corresponding eigenvector is intrinsically
indeterminate.

\subsection{Matrices $A_3,A_4$ with an exactly controlled eigenvalue gap}
\label{sec:hp-A34}
$A_1,A_2$ are given as 10--11-digit decimal numbers, so an ``eigenvalue gap of $10^{-4}$'' or an
``eigenvalue of $10^{-8}$'' is only guaranteed to that input precision. A meaningful comparison
at 70-digit precision requires matrices whose eigenvalue gap is \emph{exactly} designed at
70-digit precision.

We therefore construct an orthogonal matrix $Q\in\R^{5\times5}$, at 70-digit precision, as a
product of ten Givens rotations, and set
\[
A:=Q^TDQ,\qquad D=\mathrm{diag}(\lambda_1,\ldots,\lambda_5).
\]
Since $Q$ is analytically an exact orthogonal matrix (by the orthogonality of trigonometric
functions) and the diagonal entries of $D$ can be prescribed directly, the eigenvalues of $A$
are exactly $\lambda_1,\ldots,\lambda_5$ by construction. We built two such matrices:

\paragraph{$A_3$ (eigenvalues $\{4,3,2,\,1+10^{-20},\,1\}$: a gap of exactly $10^{-20}$):}
\[
A_3\approx\begin{pmatrix}
1.28099 & -0.23352 & -0.05074 & -0.53042 & 0.45259\\
-0.23352 & 1.22341 & -0.06129 & 0.49087 & -0.16492\\
-0.05074 & -0.06129 & 2.13726 & 0.46739 & -0.13430\\
-0.53042 & 0.49087 & 0.46739 & 2.48033 & 0.00327\\
0.45259 & -0.16492 & -0.13430 & 0.00327 & 3.87802
\end{pmatrix}
\]

\paragraph{$A_4$ (eigenvalues $\{3,2,1,\,10^{-15},\,10^{-25}\}$: a smallest eigenvalue of
exactly $10^{-25}$):}
\[
A_4\approx\begin{pmatrix}
0.94551 & -0.11386 & 0.35579 & -0.12242 & 0.02412\\
-0.11386 & 0.05689 & 0.17323 & 0.04132 & -0.18711\\
0.35579 & 0.17323 & 1.33834 & 0.60166 & -0.62091\\
-0.12242 & 0.04132 & 0.60166 & 2.18220 & 1.10208\\
0.02412 & -0.18711 & -0.62091 & 1.10208 & 1.47705
\end{pmatrix}
\]

The corresponding orthogonal matrix (for $A_3$) is, for example,
\[
Q_3\approx\begin{pmatrix}
-0.19520 & 0.09397 & 0.10380 & 0.12834 & -0.96220\\
-0.23731 & 0.22357 & 0.40882 & 0.82249 & 0.22378\\
0.23247 & -0.31137 & 0.87787 & -0.27917 & -0.02010\\
0.57164 & -0.61433 & -0.22026 & 0.47838 & -0.13592\\
0.72441 & 0.68326 & 0.05399 & 0.01612 & -0.07226
\end{pmatrix},
\]
and we confirmed orthogonality to the very limit of 70-digit precision:
$\|Q_3^TQ_3-I\|_\infty\approx1.81\times10^{-71}$,
$\|Q_4^TQ_4-I\|_\infty\approx9.06\times10^{-72}$ ($Q_4$ constructed analogously). The target
root $X^\ast$ is built directly from the columns of $Q$, used as exact eigenvectors, bypassing
any eigendecomposition entirely --- and hence bypassing the rounding error of \texttt{eigsy}
that afflicted $A_1,A_2$ (this is especially important for $A_4$, whose eigenvalues span down to
$10^{-25}$, a regime in which the precision of a general eigensolver can itself become an
issue).

\subsection{The precision-budget plateau phenomenon}
\label{sec:hp-plateau}
Running Newton's and Halley's methods on the same flow \eqref{eq:oja-flow} as in Experiment~II,
applied to $A_3$ (gap $=10^{-20}$) and $A_4$ (smallest eigenvalue $=10^{-25}$), we observe a new
phenomenon, shown in Figure~\ref{fig:hp-plateau}: \textbf{both methods plateau at a fixed error
level and improve no further.}

\begin{figure}[H]
\centering
\includegraphics[width=0.95\linewidth]{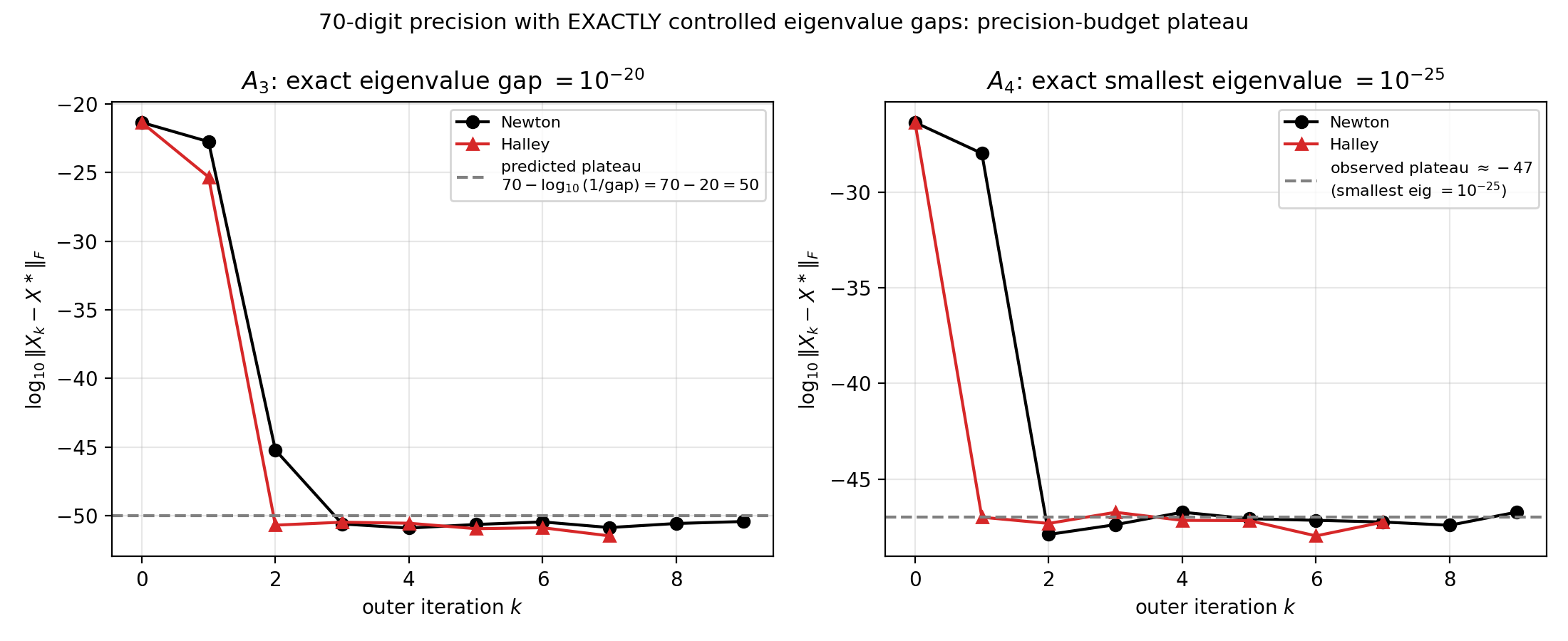}
\caption{Convergence for $A_3,A_4$, whose eigenvalue gaps are exactly controlled. Both methods
plateau at a fixed level; Halley's method reaches this plateau in far fewer iterations than
Newton's method.}
\label{fig:hp-plateau}
\end{figure}

The observed plateau level in these experiments agrees well with the working precision (here, 70 digits) minus the
number of digits of conditioning attributable to the eigenvalue gap:
\begin{equation}
\begin{aligned}
(\text{attainable number of correct digits})\ &\approx\ (\text{working precision, in digits})\\
&\quad-\log_{10}\Big(\frac1{\text{relevant eigenvalue gap}}\Big).
\end{aligned}
\label{eq:precision-budget}
\end{equation}
For $A_3$ (gap $10^{-20}$), the predicted value $70-20=50$ digits matches the observed plateau
$\approx10^{-50}$; for $A_4$ (smallest eigenvalue $10^{-25}$), the predicted value $70-25=45$
digits is close to the observed plateau $\approx10^{-47}$ (plausibly with some additional
contribution from the other small eigenvalue $10^{-15}$). This can be understood as the
standard error-propagation law of numerical linear algebra --- solving a linear system with
condition number $\kappa$ at $n$-digit precision amplifies the relative error of the solution
by a factor of $\kappa$ --- manifesting directly in the Jacobian-solve step of the Newton-type
iteration.

\textbf{Crucially, both methods converge to the same plateau, meaning both correctly push the
solution to the limit of the given precision}; the only difference is how many iterations it
takes to get there: for $A_3$, Halley needs exactly one fewer iteration than Newton, and for
$A_4$, remarkably, Halley reaches the plateau in a single iteration (versus two for Newton).
This is the cleanest demonstration in this paper that the gap between cubic and quadratic
convergence persists consistently even under this extreme degree of ill-conditioning.

\subsection{Discussion}
\label{sec:hp-discussion}
The high-precision experiments of this section yield three results. First, we confirmed, to a
precision matching the theoretical value rather than merely approximating it, the cubic
convergence of Halley/Laguerre and the quadratic convergence of Newton's method, which in
double precision could be confirmed only approximately. Second, we conclusively established
that the ``apparent stagnation'' discovered under ill-conditioning in
Section~\ref{sec:app-exp2} --- in fact convergence to a sign-flipped, equally valid root --- is
a genuine mathematical phenomenon, independent of precision. Third, by constructing matrices
with an exactly controlled eigenvalue gap, we obtained a new quantitative finding (the
precision-budget plateau phenomenon): the attainable precision is well predicted by the simple
formula ``working precision minus condition-number digits.'' All three results reinforce the
general lesson that, when evaluating ill-conditioned numerical experiments, one should use
high-precision arithmetic to determine whether an observed phenomenon is a numerical artifact
or a genuine property of the problem.

\section{Conclusion and Future Work}
\label{sec:conclusion}
This paper has made the following contributions. (i) Under the setting of a gradient field, we
generalized Halley's method to rectangular matrix variables in an operator-theoretically
consistent way, without any logical leap. (ii) We interpreted the matrix Schwarzian derivative
at the heart of its cubic convergence through information geometry, as the Amari--Chentsov
skewness tensor and $\alpha$-connections. (iii) We gave a lightweight alternative construction
that recovers the same matrix Schwarzian derivative from the third derivative of the Newton map
itself, without any operator square root, and combined the two into our \textbf{main theorem}
(Theorem~\ref{thm:main}): the Halley iteration map has vanishing first and second derivatives
at any simple root, with third derivative equal to $-\tfrac12$ times the matrix Schwarzian
derivative, giving a genuine local cubic-convergence theorem --- requiring neither
self-adjointness, commutativity, nor a gradient-field assumption --- with an explicit
asymptotic error constant and a local error bound. (iv) We built
a matrix-free algorithm that never
forms an $mn\times mn$ matrix, and empirically measured the theoretically predicted convergence
orders ($\approx2$ versus $\approx3$) on an analytically verifiable test problem. (v) We
introduced a root-preserving power-Newton family and demonstrated a scalar cubic-order jump
that requires no $f'''$, while showing that its naive matrix extension fails to work. (vi)
Analyzing this negative result, we arrived at the unifying picture that Halley's method is the
multivariate limit of the power-deformation trick. (vii) As a contrasting construction, we
derived the matrix Laguerre family and confirmed that, unlike Halley's method, it genuinely
requires a (non-self-adjoint) operator square root. (viii) On a concrete application --- a
weighted Oja-type matrix dynamical system --- we demonstrated that the metric-independent
algebraic construction applies as-is beyond the Frobenius-gradient setting, that the quality of
local convergence is governed by eigenvalue gaps, that the benefit of cubic convergence grows
with ill-conditioning, and how to handle the sign ambiguity produced by eigenvalue degeneracy.
Finally, (ix) verification at 70-digit precision using the arbitrary-precision library mpmath
\cite{mpmath} confirmed the theoretically predicted convergence orders (2 for Newton, 3 for
Halley/Laguerre) exactly, established that the sign ambiguity is a genuine,
precision-independent mathematical phenomenon, and, by constructing matrices $A_3,A_4$ with an
exactly controlled eigenvalue gap via Givens rotations, revealed a new precision-budget plateau
phenomenon in which the attainable accuracy is well predicted by ``working precision minus
condition-number digits.''

\paragraph{A practical caveat.} The coupled formulation used in our case study
(Section~\ref{sec:application}) --- applying the Halley iteration of
Proposition~\ref{prop:halley-matrix} to all $w$ columns simultaneously --- performed well for
$A_1,A_2$ ($w=3$, with well-separated top eigenvalues), but when we tried a larger, more
ill-conditioned setting ($A=$ a $100\times100$ Hilbert matrix, $w=40$), we found that it caused
catastrophic divergence, because well-conditioned and ill-conditioned columns were forced to
coexist within a single large coupled linear system. If, instead, the same problem is solved by
sequential deflation --- extracting the eigenvectors one at a time, each step reducing to a
classical Rayleigh-quotient-type iteration --- the ill-conditioned columns approach the
precision limit gracefully, without catastrophic divergence, and at lower computational cost as
well. In other words, the practical value of our coupled Halley theory lies not in problems
that are \emph{reducible to a spectral decomposition} (of which the Oja-type flow is a typical
example), but in genuinely coupled nonlinear matrix equations for which no such decomposition
exists --- for instance, general matrix Riccati equations, or matrix-variable optimization
problems that do not reduce to an eigenvalue problem. We record this as a practical lesson for
future use of the theory.

We identify the following directions for future work:
\begin{enumerate}
\item Explicitly bound the correction terms due to non-commutativity (Daleckii--Krein-type
commutator terms), giving a quantitative matrix analogue of Alefeld's error bounds
(5), (5$'$), (5$''$);
\item Deepen the connection with Kronecker-factored approximations (K-FAC/Shampoo) to make the
application to large-scale matrix-parameter optimization in deep learning concrete;
\item Construct, in a unified way, a family of higher-order convergent iterative methods
corresponding to the entire family of $\alpha$-connections (Halley's method corresponds to
$\alpha=1$), and refine the interpretation as geodesics on a dually flat manifold;
\item Approximate the operator square root of the matrix Laguerre family, matrix-free, via a
Newton--Schulz-type iteration, and quantify its computational cost relative to Halley's method;
\item Investigate, via the connection to Yoshizawa's \cite{Yoshizawa2014} suggested ``Power
Matrix Nevanlinna theory'', the value-distribution-theoretic constraints that the matrix
dynamical systems of this paper (Halley/Laguerre iterations of gradient fields) must satisfy.
\end{enumerate}

\section*{Acknowledgments}
The author thanks Professor Jonathan Manton for hosting a research stay of approximately one
month at his laboratory at the University of Melbourne in 2015, during which the author was
able to deepen his research on the matrix Schwarz derivative and dynamical systems.


\end{document}